\newtheorem{thm}{Theorem}
\newtheorem{lem}[thm]{Lemma}
\newtheorem{prop}[thm]{Proposition}
\newtheorem{cor}[thm]{Corollary}
\newtheorem*{rem}{Remark}
\newcommand{\E}{\mathbb{E}}
\newcommand{\prob}{\mathbb{P}}
\newcommand{\R}{\mathbb{R}}
\newcommand{\N}{\mathbb{N}}
\newcommand{\F}{\mathcal{F}}
\title[REM in a random magnetic field]{Microcanonical analysis of the Random Energy Model \\ in a random magnetic field}
\author[L.-P. Arguin]{Louis-Pierre  Arguin}            
 \address{L.-P. Arguin\\ 
 D\'epartement de Math\'ematiques et Statistique\\
 Universit\'{e} de Montr\'{e}al\\ 2920 chemin de la Tour\\
Montr\'{e}al, QC H3T 1J4 \\ 
Canada}
\thanks{L.-P. A. is supported by a NSERC discovery grant and a grant FQRNT {\it Nouveaux chercheurs}.}
\email{arguinlp@dms.umontreal.ca}
\author[N. Kistler]{Nicola Kistler}
\address{N. Kistler\\
Department of Mathematics\\
College of Staten Island \\
City University of New York\\
2800 Victory Boulevard \\
Staten Island 10314 New York
}
\email{nicola.kistler@csi.cuny.edu}
\keywords{Spin Glasses, Random Energy Model, Extremal Processes} \subjclass[2010]{Primary: 60G15, 82B44}
\date{10 February, 2014}
\begin{document}

\maketitle

\begin{abstract}
We study the spin glass system consisting of a Random Energy Model coupled with a random magnetic field.
This system was investigated by de Oliveira Filho, da Costa and Yokoi {\it(Phys. Rev. E 74 [2006])} who computed the free energy. 
In this paper, we recover their result rigorously using elementary large deviations arguments and a conditional second moment method.
Our analysis extends at the level of fluctuations of the ground states. 
In particular, we prove that the joint distribution of the extremal energies has the law of a Poisson process with exponential density after a 
recentering, which is random as opposed to the standard REM. 
One consequence is that the Gibbs measure of the model exhibits a one-step replica symmetry breaking as argued by de Oliveira Filho {\it et al.} using the replica method.

\end{abstract}

\section{Introduction and main results}
We consider the Hamiltonian of a disordered system composed of a Random Energy Model (REM) Hamiltonian coupled with a random magnetic field, that is,
\begin{equation}
\label{eqn: H}
H_N(\sigma)= X_N(\sigma) + Y_N(\sigma) \qquad \sigma\in \Sigma_N:= \{-1,+1\}^N
\end{equation}
where $X_N=(X_N(\sigma), \sigma\in \Sigma_N)$ are IID centered Gaussians of variance $N$, and
\begin{equation}
Y_N(\sigma):=\sum_{i=1}^N h_i \sigma_i
\end{equation}
where $\vec{h}=(h_i, i\in\N)$ are IID random variables independent of $X_N$. 
The random variables $X_N$ and $\vec{h}$ are defined on a probability space $(\Omega, \F,\prob)$ with the expectation denoted by $\E$. We will assume that  $\E[\exp th_1]<\infty$ for all $t\in \R$.

The motivation to study this spin glass model is two-fold. First, we are interested in understanding the effect of a random magnetic field on a glassy transition from a rigorous standpoint.
This effect is well understood in the case of ferromagnetic models, see for example \cite{aizenman-wehr}, but few rigorous results are known in the literature for spin glasses to our knowledge.
Second, the model gives a non-trivial example of a solvable spin glass where the (by now) standard approaches such as Talagrand's cavity method \cite{talagrand} or Guerra's interpolation scheme \cite{guerra} cannot be applied. To overcome this obstacle, we resort to the classical tools of large deviations, and to a {\it conditional second moment method}. This approach allows to derive a complete picture of the phase transition up to the level of the fluctuations of the ground states. The model may thus shed some light on the connections at the microcanonical level between the standard treatment of statistical mechanics models {\it \`a la Gibbs} and the successful Parisi approach based on the ultrametric structure of the Gibbs measure, see \cite{panchenko} for a review of the rigorous results in mean-field models.

The first result of the paper is the computation of the free energy.
\begin{thm}
\label{thm: free}
For $\beta>0$,
$$
\lim_{N\to\infty} \frac{1}{N}\log \sum_{\sigma\in \Sigma_N}\exp \beta H_N(\sigma)
=
\begin{cases}
\log 2 +\frac{\beta^2}{2} + \E[ \log \cosh \beta h_1] \ &\text{ if $\beta\leq \beta_c$}\\
E_{\max}\beta \ &\text{ if $\beta\geq \beta_c$} 
\end{cases}
\ \text{ $\prob$-a.s.}
$$
where $\beta_c$ satisfies the self-consistency equation
\begin{equation}
\label{eqn: beta c}
\beta_c=\sqrt{2\big(\log 2 - (\beta_c \E[h_1\tanh \beta_ch_1] -\E[\log\cosh \beta_c h_1])\big)}
\end{equation}
and
\begin{equation}
\label{eqn: E_max}
E_{\max}=\beta_c+ \E[h_1\tanh\beta_c h_1]\ .
\end{equation}
\end{thm}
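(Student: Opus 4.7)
The plan is to condition on the random field $\vec h$ and decompose the partition function according to the magnetization type $y := Y_N(\sigma)/N$. Conditionally on $\vec h$, the variables $(X_N(\sigma))_{\sigma\in\Sigma_N}$ are IID centered Gaussians of variance $N$, while the weights $e^{\beta Y_N(\sigma)}$ are deterministic functions of $\sigma$; this separation enables the use of Cram\'er's theorem at the $Y$-level and of standard REM estimates at the $X$-level.

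\emph{Upper bound.} A direct first-moment computation yields $\E[Z_N(\beta)\mid\vec h] = e^{\beta^2 N/2}\prod_{i=1}^N 2\cosh(\beta h_i)$, so Markov's inequality combined with Borel--Cantelli gives $\limsup \tfrac{1}{N}\log Z_N(\beta) \le \log 2 + \beta^2/2 + \E[\log\cosh\beta h_1]$ almost surely, which is sharp for $\beta\le\beta_c$. For $\beta>\beta_c$ I would partition $\Sigma_N$ into $\delta$-slices of the magnetization type; Cram\'er's theorem for the uniform measure on $\Sigma_N$ (conditionally on $\vec h$) yields
$$\tfrac{1}{N}\log\#\{\sigma : Y_N(\sigma)/N \in (y-\delta,y+\delta)\} = \log 2 - I(y) + o(1)\quad\text{a.s.},$$
with $I(y) := \sup_t(ty - \Lambda(t))$ and $\Lambda(t) := \E[\log\cosh(th_1)]$. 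Within each slice the classical REM estimate (Gaussian tail plus union bound) gives
$$\tfrac{1}{N}\log\sum_{\sigma:\,Y_N(\sigma)/N\approx y}e^{\beta X_N(\sigma)} \le \min\!\bigl(\tfrac{\beta^2}{2}+s(y),\,\beta\sqrt{2s(y)}\bigr)+o(1),\qquad s(y) := \log 2 - I(y),$$
and a finite net in $y$ together with a union bound converts this into $\limsup\tfrac{1}{N}\log Z_N(\beta) \le \sup_y\bigl\{\beta y + \min(\tfrac{\beta^2}{2}+s(y),\beta\sqrt{2s(y)})\bigr\}$. The high-temperature branch is maximized at $y_\beta = \Lambda'(\beta)$, which lies in the regime $\beta \le \sqrt{2s(y_\beta)}$ iff $\beta\le\beta_c$, and gives $\log 2 + \beta^2/2 + \Lambda(\beta)$. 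In the low-temperature branch the first-order condition for $\beta y + \beta\sqrt{2s(y)}$ is $\sqrt{2s(y^*)} = I'(y^*)$, which, via the Legendre identities $y^* = \Lambda'(t^*)$, $I'(y^*) = t^*$, $I(y^*) = t^*y^* - \Lambda(t^*)$, reduces to $(t^*)^2 = 2(\log 2 - t^*\Lambda'(t^*) + \Lambda(t^*))$: this is exactly \eqref{eqn: beta c} with $t^* = \beta_c$, so the supremum equals $\beta(\beta_c + \Lambda'(\beta_c)) = \beta E_{\max}$.

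\emph{Lower bound.} For $\beta\le\beta_c$ I would apply Paley--Zygmund to $Z_N$ conditionally on $\vec h$: an explicit computation yields
$$\frac{\E[Z_N^2\mid\vec h]}{(\E[Z_N\mid\vec h])^2} = 1 + (e^{\beta^2 N}-1)\prod_{i=1}^N\frac{\cosh(2\beta h_i)}{2\cosh^2(\beta h_i)},$$
which concentrates at $1$ as long as $\beta^2 + \Lambda(2\beta) - 2\Lambda(\beta) - \log 2 < 0$; closer to $\beta_c$ the straight second moment diverges, and one must truncate $Z_N$ by restricting to configurations with $X_N(\sigma)\le\beta N$, in analogy with Eisele's argument for the plain REM. For $\beta\ge\beta_c$ it is enough to exhibit a single $\sigma$ with $H_N(\sigma)/N \ge E_{\max}-\e$: by the LDP there are at least $e^{N(s(y^*)-o(1))}$ configurations of type $y^*$, and a conditional second-moment argument applied to the count $\#\{\sigma : Y_N(\sigma)/N\approx y^*,\ X_N(\sigma)\ge(\beta_c-\e)N\}$ shows this set is non-empty with high probability; any such $\sigma$ satisfies $H_N(\sigma)/N \ge y^* + \beta_c - 2\e = E_{\max} - 2\e$.

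The main obstacle is the conditional second-moment method itself: the ratio of second to squared first moment has to be controlled uniformly in the quenched disorder $\vec h$, the direct computation does not reach up to $\beta_c$, and the appropriate truncation (transferred from the standard REM) must then be combined with the LDP estimate on the slice sizes, which are themselves non-trivial random quantities. A secondary technical point is that the $\delta$-net argument in $y$ requires the continuity of $I(\cdot)$ on the relevant interval, guaranteed by the integrability assumption $\E[e^{th_1}]<\infty$ for all $t\in\R$.
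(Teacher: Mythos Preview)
Your proposal is correct in outline and would lead to the result, but it is organized differently from the paper and trades one difficulty for another.

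The paper works \emph{microcanonically}: it fixes the total energy density $E=H_N(\sigma)/N$, computes the entropy
\[
S(E)=\max_{y}\Big\{\log 2-\tfrac12(E-y)^2-I(y)\Big\}
\]
by combining the quenched LDP for $y_{N,\vec h}$ (your Cram\'er step; actually G\"artner--Ellis, since the summands $h_i\sigma_i$ are independent but not identically distributed under $\mu_N$) with Varadhan's lemma, and then shows that the counting variable $\mathcal N_N(E)$ concentrates by a conditional second moment on the \emph{count}. Because the $X_N(\sigma)$ are independent, this second moment is essentially trivial: $\E[\mathcal N_N^2\mid\vec h]=\E[\mathcal N_N\mid\vec h]+\E[\mathcal N_N\mid\vec h]^2-(\text{diagonal})$, so \eqref{eqn 2nd moment} holds whenever $S(E)>0$, with no truncation needed and no separation into high/low temperature. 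The free energy then follows from a single Laplace--type argument, $f(\beta)=\max_E\{\beta E+S(E)\}$, and the two phases emerge from whether the maximizer is interior or hits $E_{\max}$.

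You instead slice by $y=Y_N(\sigma)/N$ and invoke the REM dichotomy inside each slice, arriving at the equivalent variational formula $\sup_y\{\beta y+\min(\tfrac{\beta^2}{2}+s(y),\beta\sqrt{2s(y)})\}$. Your optimization and the identification of $\beta_c$, $E_{\max}$ via Legendre duality are exactly right and match the paper's. The price you pay is on the lower bound: applying Paley--Zygmund to $Z_N$ itself forces you to control the full second moment of the partition function, which, as you note, diverges before $\beta_c$ and requires an Eisele-type truncation grafted onto the quenched slice sizes. The paper sidesteps this entirely because its second moment is on an indicator sum, not on $e^{\beta H_N}$. Your low-temperature lower bound (second moment on the count $\#\{\sigma:Y_N/N\approx y^*,\ X_N\ge(\beta_c-\varepsilon)N\}$) is in fact the same mechanism the paper uses, just specialized to one energy level rather than all of them.

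In short: both routes are valid; the paper's microcanonical organization makes the second-moment step uniform and truncation-free, while your approach is closer to the textbook REM treatment but has to fight harder near $\beta_c$.
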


\begin{rem}
By concentration of measure, one easily sees that the same result holds for the $\E$-average of the free energy.
\end{rem}

Equation \eqref{eqn: E_max} gives the partition of the maximal energy density between the one of the REM, $\beta_c$, and the one of the random field interaction,  $\E[h_1\tanh \beta_ch_1]$.
The formula for the free energy was first obtained by de Oliveira Filho, da Costa and Yokoi \cite{filho-dacosta-yokoi}. 
Our contribution is to rigorously back their argument using large deviation techniques.
The idea is that the random field energy density
\begin{equation}
 y_{N,\vec h}(\sigma):=\frac{1}{N} \sum_ih_i \sigma_i 
\end{equation}
satisfies a large deviation principle (LDP) with rate function $I$ conditionally on $\vec h$ by the G\"artner-Ellis theorem, cf. Lemma \ref{lem: LDP}.
Thus for a given energy $E$, if the typical value of $ y_{N,\vec h}$ is $y^*=y^*(E)$, then there are approximately $\exp(N(\log 2 -I(y^*)))$ $\sigma$'s for which $y_{N,\vec{h}}$ is approximately $y^*$.
Since the random field $Y_N(\sigma)$ is independent of the REM Hamiltonian $X_N(\sigma)$, the system essentially reduces to a REM model on $\exp(N(\log 2 -I(y^*)))$ Gaussian variables of variance $N$. 
In particular, the freezing of the model occurs not only because $\beta$ is increased, but also because the number of relevant configurations decreases as $y^*$ increases with $\beta$.
From the expression of the critical $\beta$ for the REM, we thus expect the system to freeze at $\beta_c=\sqrt{2(\log 2-I(y^*(E_{\max}))}$. 

It is interesting to remark that tight upper bounds for the free energy can be obtained by means of fractional moments, and annealing. To see this, let $m\in(0,1]$. It then holds: 
\begin{equation} \begin{aligned}
\frac{1}{N} \log \sum_{\sigma \in \Sigma_N} \exp \beta H_N(\sigma) & = 
\frac{1}{N m} \log \left(\sum_{\sigma \in \Sigma_N} \exp \beta H_N(\sigma)\right)^m \\
& \leq \frac{1}{N m} \log \sum_{\sigma \in \Sigma_N} \exp m \beta H_N(\sigma) \\
& =  \frac{1}{N m} \log \sum_{\sigma \in \Sigma_N} \exp\left[  m \beta X_N(\sigma) + \beta m Y_N(\sigma)\right]\,, \label{frac_one}
\end{aligned}\end{equation}
the second step by straightforward convexity arguments. Consider now the Gibbs measure 
\begin{equation}
P_{\beta, m, \boldsymbol h}(\sigma) = \frac{1}{Z_N(\beta, m, \boldsymbol h)}\exp \beta m Y_N(\sigma),
\end{equation}
for $Z_N(\beta, m, \boldsymbol h) = \exp\left[N \log 2+ \sum_{i=1}^N \log \cosh(\beta m h_i) \right]$. Denoting by $E_{\beta, m, \boldsymbol h}$ the (quenched) expectation with respect to this tilted coin tossing measure, we may reformulate the right-hand side of \eqref{frac_one} 
to obtain 
\begin{equation} \begin{aligned}
& \frac{1}{N} \log \sum_{\sigma \in \Sigma_N} \exp \beta H_N(\sigma) \leq \\
& \qquad \leq 
\frac{1}{Nm} \log E_{\beta, m, \boldsymbol h}\left[ \exp \beta m X_N(\sigma)\right] + \frac{\log 2}{m} + \frac{1}{N m} \sum_{i=1}^N \log \cosh(\beta m h_i).
\end{aligned} \end{equation}
Taking now expectation with respect to the environment, and using Jensen's inequality (the annealing) yields
\begin{equation} \begin{aligned}
& \E\left[ \frac{1}{N} \log \sum_{\sigma \in \Sigma_N} \exp \beta H_N(\sigma) \right] \leq \frac{\beta^2 m}{2} + \frac{\log 2}{m} + \frac{1}{m} \E \log \cosh(\beta m h_1).
\end{aligned} \end{equation}
As this holds for {\it any} $m\in (0,1]$, we have in fact the following upper bound for the 
free energy: 
\begin{equation}
\E\left[ \frac{1}{N} \log \sum_{\sigma \in \Sigma_N} \exp \beta H_N(\sigma) \right] 
\leq \inf_{0 < m \leq 1} \left\{\frac{\beta^2 m}{2} + \frac{\log 2}{m} + \frac{1}{m} \E \log \cosh(\beta m h_1)\right\}.
\end{equation}
The variational principle on the right-hand side can be easily solved. Omitting the elementary considerations, 
one indeed recovers the limiting free energy as established in Theorem \ref{thm: free}. Of course, 
this provides an upper bound only, but the simplicity of the method is somewhat puzzling. We are not aware of a similarly efficient method to derive matching lower bounds. \\

The second result of the paper addresses the fluctuations of the extremal energies.
\begin{thm}
\label{thm: extremal}
There exists $c_1=c_1(N,\vec{h})$ and $c_2=c_2(N,\vec h)$ such that for 
\begin{equation}
\label{eqn: r_N}
r(N,\vec{h}):= c_1 N -c_2\ \log N\ ,
\end{equation}
the point process 
 $(H_{N}(\sigma)- r(N,\vec{h}), \sigma\in \Sigma_N)$ conditioned on $\vec{h}$ converges weakly to a Poisson process with intensity $C e^{-\beta_c z} dz$ 
 for some explicit deterministic constant $C>0$ and for $\prob$-almost all $\vec{h}$.
 Moreover, 
\begin{equation}
\label{eqn: conv c}
c_1(N,\vec{h}) \to E_{\max} \qquad c_2(N,\vec h)\to \frac{1}{2\beta_c}\qquad \text{$\prob$-a.s.}
\end{equation}
\end{thm}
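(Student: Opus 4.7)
The plan is to reduce the problem, conditionally on $\vec h$, to a collection of independent Gaussian random variables shifted by the random-field magnetizations, and then to establish the Poisson limit via a sharp one-moment analysis combined with Kallenberg's criterion. Conditionally on $\vec h$, the $X_N(\sigma)$ are i.i.d.\ $\mathcal{N}(0,N)$ while the shifts $Y_N(\sigma) = N y_{N,\vec h}(\sigma)$ are deterministic, so the points $H_N(\sigma) - r(N,\vec h)$ are conditionally independent across $\sigma$. By Kallenberg's theorem for independent triangular arrays it suffices to prove
\[
\sum_{\sigma \in \Sigma_N} \prob\bigl(H_N(\sigma) - r(N,\vec h) > a \,\big|\, \vec h\bigr) \longrightarrow \frac{C}{\beta_c}\, e^{-\beta_c a}, \qquad a \in \R,
\]
$\prob$-almost surely, together with uniform smallness of the individual probabilities (which is automatic since each is exponentially small in $N$).

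To evaluate the first moment, apply the sharp Gaussian tail $\bar\Phi(u) \sim (u\sqrt{2\pi})^{-1}\exp(-u^2/2)$ and stratify over the magnetization $y = y_{N,\vec h}(\sigma)$. The sum rewrites as
\[
\sum_{\sigma \in \Sigma_N} \prob\bigl(H_N(\sigma) > r + a \,\big|\, \vec h\bigr) \approx \int \rho_N(y\,|\,\vec h)\,\bar\Phi\!\left(\frac{r+a-Ny}{\sqrt N}\right) dy,
\]
where $\rho_N(y\,|\,\vec h)\,dy := |\{\sigma : y_{N,\vec h}(\sigma) \in [y, y+dy]\}|$ is the quenched counting density. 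A quenched local LDP, available under the Cram\'er hypothesis $\E[\exp(t h_1)] < \infty$, sharpens Lemma \ref{lem: LDP} to
\[
\rho_N(y\,|\,\vec h) \sim \sqrt{\tfrac{N\,\hat I_N''(y)}{2\pi}}\,\exp\bigl(N(\log 2 - \hat I_N(y))\bigr),
\]
where $\hat I_N$ is the Legendre transform of the empirical log-generating function $\hat\Lambda_N(\theta) := N^{-1}\sum_{i=1}^N\log\cosh(\theta h_i)$. Laplace's method on the $y$-integral picks up a saddle $y_N^\star(\vec h)$ solving $\hat I_N'(y_N^\star) = c_1 - y_N^\star$, which converges almost surely to $y^\star = \E[h_1\tanh\beta_c h_1]$; this pins down $c_1(N,\vec h) \to y^\star + \beta_c = E_{\max}$, while the prefactor $N^{(c_1-y^\star)c_2 - 1/2}$ arising from the Gaussian tail forces $c_2(N,\vec h) \to 1/(2\beta_c)$. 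The residual factors consolidate to $C = \sqrt{I''(y^\star)/\bigl(2\pi(1+I''(y^\star))\bigr)}$, deterministic precisely because the random choice of $r(N,\vec h)$ absorbs the $O(\sqrt N)$ CLT fluctuations of $\hat\Lambda_N - \Lambda$ at $\theta = \beta_c$.

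The main technical obstacle is the quenched sharp density asymptotic above, i.e.\ a quenched Edgeworth-type expansion for $\rho_N(y\,|\,\vec h)$ that is uniform over the window $|y - y^\star| \lesssim N^{-1/2}\log N$ which dominates the Laplace evaluation. Outside this window, the super-exponential Gaussian decay overwhelms the counting growth, so those configurations contribute negligibly by a crude first-moment bound. Inside the window, the Cram\'er hypothesis together with the smoothness of $\Lambda$ at $\theta = \beta_c$ ensures validity of the local expansion in the quenched measure; this is the analogue, for the Bahadur--Rao-type sharp LDP, of the quenched CLT. Once the first-moment convergence is in hand, extending from half-lines $(a,\infty)$ to arbitrary bounded Borel sets is routine, and the Poisson limit for the full process follows from Kallenberg.
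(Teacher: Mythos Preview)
Your outline shares the paper's architecture: condition on $\vec h$, exploit conditional independence of the $H_N(\sigma)$, define $r(N,\vec h)$ through the quenched entropy built from $\hat\Lambda_N$ (the paper's $\psi_N$), and show that $y_N^\star\to y^\star$ forces $c_1\to E_{\max}$ and $c_2\to 1/(2\beta_c)$. Your constant $C$ also agrees with the paper's $(2\pi(1+\psi''(\beta_c)))^{-1/2}$ once one uses the duality $I''(y^\star)=1/\psi''(\beta_c)$.

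The genuine gap is precisely the step you flag as the ``main technical obstacle'': the quenched Bahadur--Rao density
\[
\rho_N(y\mid\vec h)\ \sim\ \sqrt{\tfrac{N\,\hat I_N''(y)}{2\pi}}\,\exp\bigl(N(\log 2-\hat I_N(y))\bigr).
\]
This is not available under the stated hypotheses and can fail. Conditionally on $\vec h$, the summands $h_i\sigma_i$ are independent two-point variables with \emph{different} supports $\{-h_i,h_i\}$; a local CLT for such arrays requires a non-lattice condition or a lattice correction, neither of which follows from $\E[e^{th_1}]<\infty$. In the constant-field case $h_i\equiv h$, which the theorem is explicitly meant to cover, the law of $y_{N,\vec h}$ is supported on the $2h/N$-lattice and your density formula is false as written. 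Since your Laplace evaluation integrates $\rho_N$ over a window of width $O(N^{-1/2})$, the weak LDP of Lemma~\ref{lem: LDP} cannot substitute: the local-CLT constant is exactly what is at stake. The paper sidesteps this entirely. It works with the Laplace functional, writes the sum over $\sigma$ as $2^N\mu_N(\cdot)$, exponentially tilts to $\tilde\mu_N(d\sigma)\propto e^{t_N^\star N y_N(\sigma)}\mu_N(d\sigma)$ so that $\tilde\mu_N(y_N)=y_N^\star$, and then needs only the \emph{weak} (Lindeberg--Feller) CLT for $\sqrt N\,(y_N-y_N^\star)$ under $\tilde\mu_N$, applied to the bounded continuous test function $y\mapsto e^{-y^2/2}$. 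That device is both simpler and indifferent to the lattice/non-lattice dichotomy; if you want to repair your route, recast the stratification over $y$ as an expectation under a tilted measure so that only distributional convergence is required.
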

It follows from the theorem that the maximum of $H_N(\sigma)$ given $\vec{h}$ has Gumbel fluctuations
around the recentering term $r(N,\vec{h})$.
We stress that the result is {\it quenched} in the sense that the convergence holds given the random field $\vec{h}$ (thus also when the random field is averaged). 
Nevertheless, the fluctuations (perhaps surprisingly) remain independent of the realization of the field.
We remark that the theorem includes the particular case of the REM model with deterministic field studied by Bovier \& Klimovsky \cite{bovier-klimovsky}.
The proof in  \cite{bovier-klimovsky} relies on the precise knowledge from elementary combinatorics of the number of spin configurations with a given magnetization. 
This is impossible to do in the case of $y_{N,h}$ because $\vec{h}$ is random. 
Instead, we generalize the argument by noticing that proving a central limit theorem for this order parameter suffices, cf. Lemma \ref{lem: CLT}.
Finally, we point out that a frontal attack with the recentering term $r_N=E_{\max} N - \frac{1}{2\beta_c} \log N$ as
suggested by  \eqref{eqn: conv c}  will fail. This choice only works in case of a {\it deterministic} magnetic field. In our case, the fluctuations of $y_{N,\vec{h}}(\sigma)$ are too large to ensure convergence. As it turns out, the choice of $c_1$ must depend on the randomness in such a way that the fluctuations are on the right scale for convergence. This delicate point will be emphasized in the proof. \\

As a consequence of Theorem \ref{thm: extremal}, we obtain the law of the Gibbs measure at low temperature and the overlap distribution.
For this, we denote the normalized Gibbs weight by
$$
G_{\beta,N}(\sigma):= \frac{e^{\beta H_N(\sigma)}}{Z_N(\beta)} 
$$
The overlap between $\sigma,\sigma'\in \Sigma_N$ is defined as $R_N(\sigma,\sigma'):= \frac{1}{N}\sum_{i=1}\sigma_i\sigma_i'$.
The form of the overlap distribution was obtained by de Oliveira Filho, da Costa and Yokoi using the replica method.
\begin{thm}
\label{thm: gibbs}
For $\beta>\beta_c$, the normalized Gibbs weights
$
\left( G_{\beta, N}, \sigma\in \Sigma_N\right)_\downarrow
$
 ordered in decreasing order
converges to a Poisson-Dirichlet variable with parameter $\beta_c/\beta$ as $N\to\infty$. 
\end{thm}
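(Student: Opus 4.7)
The plan is to derive Theorem \ref{thm: gibbs} from Theorem \ref{thm: extremal} via Ruelle's classical identification of the Poisson--Dirichlet law with the vector of normalized exponentials of a Poisson process of exponential intensity. Writing $\tilde{H}_N(\sigma):=H_N(\sigma)-r(N,\vec h)$, the normalized Gibbs weights are invariant under the recentering and can be rewritten as
$$
G_{\beta,N}(\sigma) = \frac{e^{\beta \tilde{H}_N(\sigma)}}{\sum_{\sigma'\in\Sigma_N} e^{\beta \tilde{H}_N(\sigma')}}.
$$
Theorem \ref{thm: extremal} asserts that the point process $\sum_\sigma \delta_{\tilde H_N(\sigma)}$ converges, quenched in $\vec h$, to a Poisson process $\eta=\sum_i \delta_{z_i}$ on $\R$ of intensity $Ce^{-\beta_c z}dz$. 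By the mapping theorem applied to $w=e^{\beta z}$, the image process has intensity proportional to $w^{-1-\beta_c/\beta}dw$ on $(0,\infty)$, and since $\beta_c/\beta<1$, the standard computation shows that the normalized ordered exponentials $\bigl(e^{\beta z_i}/\sum_j e^{\beta z_j}\bigr)_\downarrow$ are a.s.\ well defined and follow the Poisson--Dirichlet law of parameter $\beta_c/\beta$. It remains to transfer this to the finite-$N$ Gibbs weights.

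The issue is that the map ``ordered normalized exponentials'' is not weakly continuous on the space of point measures, so I would first perform a truncation. Fix $L>0$ and decompose $Z_N(\beta)e^{-\beta r(N,\vec h)} = Z_N^{>-L}+Z_N^{\le -L}$ according to whether $\tilde H_N(\sigma)>-L$ or not. The key quenched estimate to establish is
$$
\lim_{L\to\infty}\limsup_{N\to\infty}\, \frac{Z_N^{\le -L}}{Z_N^{>-L}} = 0 \qquad \text{in $\prob(\,\cdot\,|\vec h)$-probability, for $\prob$-a.e.\ $\vec h$.}
$$
The denominator converges by Theorem \ref{thm: extremal} to $\sum_{z_i>-L} e^{\beta z_i}$, which is strictly positive a.s. For the numerator, I would cut the range $\tilde H_N(\sigma)\le -L$ into windows $[-k-1,-k]$ and reuse the first-moment bound already behind Theorem \ref{thm: extremal}: for $k$ up to a small positive multiple of $N$, the quenched expected number of $\sigma$'s with $\tilde H_N(\sigma)\in[-k-1,-k]$ is $O(e^{\beta_c k})$, contributing $O(e^{-(\beta-\beta_c)k})$ to $Z_N^{\le -L}$; for deviations of order $N$, the Gaussian concentration and large deviation estimates used in Theorem \ref{thm: free} force a contribution exponentially small in $N$. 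Summing in $k\ge L$ yields a bound that is summable because $\beta>\beta_c$ and vanishes as $L\to\infty$.

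Once the truncation is in place, the functional $\eta\mapsto\bigl(e^{\beta z_i}/\sum_{j:z_j>-L} e^{\beta z_j}\bigr)_\downarrow$ is continuous on locally finite configurations with finitely many points above $-L$. Theorem \ref{thm: extremal} then gives quenched convergence in distribution of the truncated ordered Gibbs weights to those of $\eta|_{(-L,\infty)}$. Letting $L\to\infty$ and using the tail estimate on both finite-$N$ and limiting weights yields convergence of the full ordered Gibbs sequence to $\bigl(e^{\beta z_i}/\sum_j e^{\beta z_j}\bigr)_\downarrow$, which is Poisson--Dirichlet of parameter $\beta_c/\beta$ as noted above.

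The hard part will be making the tail estimate genuinely \emph{quenched}: the conclusion of Theorem \ref{thm: extremal} holds for $\prob$-a.e.\ realization of $\vec h$, and we need a bound on $Z_N^{\le -L}$ uniform in $N$ with the same almost-sure guarantee. This requires enough regularity of the first-moment computation in $\vec h$---already implicit in the proof of Theorem \ref{thm: extremal}, but needing to be stated in a form strong enough to sum over the windows $k\ge L$ uniformly in $N$. Once this quenched tail control is secured, the rest is the soft Poisson--Dirichlet identification of Ruelle.
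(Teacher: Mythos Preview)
Your proposal is correct and follows essentially the same route as the paper: truncate the recentered energies to a compact window, use Theorem~\ref{thm: extremal} and continuity on the truncated weights, and control the tail of the partition function via a first-moment (Markov) bound exploiting $\beta>\beta_c$---your window decomposition is just a more explicit rendering of the paper's single appeal to the intensity computation behind Theorem~\ref{thm: extremal}. The paper works directly on the space of mass partitions $\mathcal S$ with the metric $d(\vec p,\vec p')=\sum_n 2^{-n}|p_n-p_n'|$, truncates two-sidedly to $[-\delta,\delta]$, and bounds $d(G_{\beta,N}^\delta,G_{\beta,N})\le 2(Z_N-Z_N^\delta)/Z_N$, but the architecture and the key estimate are identical to yours.
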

\begin{cor}
\label{cor: overlap}
The two-overlap distribution converges in law to a sum of two delta masses:
\begin{equation}
\label{eqn: overlap}
\E G_{\beta,N}^{\times 2}\{ R_N(\sigma,\sigma')\in dq\} \to \frac{\beta_c}{\beta} \delta_{q} + \left(1-\frac{\beta_c}{\beta}\right)\delta_1
\end{equation}
where $q=\E[\tanh^2 \beta_c h_1]$.
\end{cor}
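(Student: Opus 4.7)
The plan is to combine the Poisson--Dirichlet description of the low-temperature Gibbs weights given by Theorem \ref{thm: gibbs} with a direct analysis of the overlap between two distinct extremal configurations. I first split the two-replica measure into diagonal and off-diagonal parts:
\begin{equation*}
\E G_{\beta,N}^{\times 2}\{R_N(\sigma,\sigma')\in \cdot\,\} = \E\Bigl[\sum_\sigma G_{\beta,N}(\sigma)^2\Bigr]\,\delta_1 + \E \sum_{\sigma \neq \sigma'} G_{\beta,N}(\sigma) G_{\beta,N}(\sigma') \,\delta_{R_N(\sigma,\sigma')}.
\end{equation*}
The diagonal term is handled by Theorem \ref{thm: gibbs}: for the Poisson--Dirichlet law with parameter $s = \beta_c/\beta$, the classical Campbell-type identity yields $\E \sum_k G_k^2 = 1 - s$, and uniform integrability (the summand is bounded by $1$) transfers this to the finite-$N$ problem. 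This delivers the mass $(1-\beta_c/\beta)$ at $q=1$ in \eqref{eqn: overlap}.

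For the off-diagonal term, the mechanism is that the configurations carrying positive Gibbs mass are the extremal ones of Theorem \ref{thm: extremal}, and each such $\sigma$ is forced to satisfy the magnetization constraint $y_{N,\vec h}(\sigma) \approx y^* := \E[h_1 \tanh \beta_c h_1]$. Indeed, the maximum of $\log 2 - I(y) - \tfrac{1}{2}(E_{\max} - y)^2$ is realized at $y=y^*$ with conjugate Legendre parameter equal to $\beta_c$, and any drift off $y^*$ produces a strictly smaller count of available competitors. Conditioned on this constraint, the G\"artner--Ellis change of measure identifies the effective law of $\sigma$ with the tilted product
\begin{equation*}
\mu_{\vec h}(\sigma) = \prod_{i=1}^N \frac{e^{\beta_c h_i \sigma_i}}{2 \cosh \beta_c h_i},
\end{equation*}
under which $\E[\sigma_i \mid \vec h] = \tanh \beta_c h_i$. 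Drawing two extremal configurations $\sigma \neq \sigma'$ independently from $\mu_{\vec h}$ and applying the strong law of large numbers $\vec h$-almost surely gives
\begin{equation*}
R_N(\sigma,\sigma') = \frac{1}{N}\sum_{i=1}^N \sigma_i \sigma_i' \;\longrightarrow\; \E[\tanh^2 \beta_c h_1] = q.
\end{equation*}

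The main obstacle is promoting the above heuristic into a rigorous statement that two \emph{distinct} Gibbs samples behave as independent draws from $\mu_{\vec h}$. I would handle this by a conditional second moment argument in the two-replica space: for small $\delta>0$, control the first and second moments of
\begin{equation*}
\mathcal{N}_\delta := \#\bigl\{(\sigma,\sigma') :\; H_N(\sigma), H_N(\sigma') \geq r(N,\vec h),\; |y_{N,\vec h}(\sigma)-y^*|,\,|y_{N,\vec h}(\sigma')-y^*| < \delta,\;|R_N(\sigma,\sigma') - q|<\delta\bigr\}
\end{equation*}
using essentially the same second moment scheme employed for Theorem \ref{thm: extremal}, now with the soft constraint on $R_N$, while showing that the analogous count over $|R_N(\sigma,\sigma')-q|\geq \delta$ is of strictly smaller exponential order than $(\#\text{extremals})^2$. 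Inserting the resulting concentration into the off-diagonal sum and pairing it with the Poisson--Dirichlet weights from Theorem \ref{thm: gibbs} produces the remaining $(\beta_c/\beta)\,\delta_q$ contribution and closes \eqref{eqn: overlap}.
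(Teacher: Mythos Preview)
Your approach is essentially the paper's: the diagonal part is handled identically via Theorem~\ref{thm: gibbs} and the Poisson--Dirichlet identity $\E\sum_i \xi_i^2 = 1 - \beta_c/\beta$, and the off-diagonal part is controlled by recognizing that extremal configurations are governed by the tilted product measure (the paper uses the finite-$N$ tilt $\tilde\mu_N$ at $t=t_N^*$ rather than the limiting $\beta_c$), under which $R_N(\sigma,\sigma')$ concentrates at $q=\E[\tanh^2\beta_c h_1]$ by the law of large numbers. The one economy the paper makes over your plan is that no second-moment control of the ``good'' pair count $\mathcal N_\delta$ is needed: once the diagonal mass is pinned at $1-\beta_c/\beta$, it suffices to show that the \emph{expected} number of extremal pairs with $R_N\in I_\varepsilon=[0,q-\varepsilon]\cup[q+\varepsilon,1)$ tends to zero---a pure first-moment computation, obtained by repeating the Gaussian manipulations \eqref{eqn: square}--\eqref{eqn: sum5} for two replicas and then invoking the weak law for $R_N$ under $\tilde\mu_N\times\tilde\mu_N$. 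Conservation of total mass then forces the remaining $\beta_c/\beta$ onto $\{q\}$. (Note also that at the extremal window the pair counts are $O(1)$, not exponential, so your phrasing ``strictly smaller exponential order than $(\#\text{extremals})^2$'' should simply read ``vanishing expectation''.)
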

In physics terms, one can interpret the individual $\sigma$'s with extremal energies, i.e.\@ close to $r_N$, as the {\it pure states}. 
Their Gibbs weight is macroscopic with Poisson-Dirichlet distributions. 
In the course of the proof, we will show that these optimal $\sigma$'s are chosen among the configurations with order parameters $y_{N,h}(\sigma)\approx y^*(E_{\max})$. It is to be noted that the overlap is strictly non-zero even when the random field $\vec{h}$ is centered.
In that case, the pure states exhibit a zero magnetization but the satisfaction of the random field constraint creates a non-zero overlap between them.
\\

Throughout the paper, the notation $o(1)$ denotes a term that goes to $0$ when $N\to\infty$.
The uniform measure on the hypercube $\Sigma_N$ will be denoted by $\mu_N$. The expectation of a function $f$ on $\sigma_N$ with respect to $\mu_N$ will sometimes be denoted $\mu_N(f)$ for short.

\section{Proof of Theorem \ref{thm: free}}
The first step to compute the free energy is to obtain the entropy of configurations at a given energy level, cf. Proposition \ref{prop: entropy}. 
For this purpose, the entropy for the random field energy density $y_{N,\vec{h}}(\sigma)$ is needed.
Note that, by the strong law of large numbers, the typical value under the uniform measure is
$$
y_{N,\vec{h}}(\sigma)=\frac{1}{N}\sum_{i=1}^N h_i\sigma_i \to 0 \ \  \text{ $\prob\times \mu_N$-a.s.}
$$left-hand side
Moreover, by taking $\sigma_i=\text{sgn} \ h_i$, it follows that $y_{N,\vec{h}}(\sigma) \leq \E|h_1|$ for all $\sigma$ for $N$ large enough. 
It turns out that we can also control the large deviations of $y_{N,\vec{h}}$ around the mean 
under $\mu_N$ for $\prob$-almost all realizations of $\vec{h}$.
\begin{lem}
\label{lem: LDP}
On a set of $\vec{h}$ of $\prob$-probability one, 
the variables $(y_{N,\vec{h}})_{N\in\N}$ on the probability space $(\Sigma_N, \mu_N)$ satisfies an LDP with
rate function
\begin{equation}
\label{eqn: I}
I(y):= \sup_{t\in\R} \left\{yt - \psi(t) \right\} \qquad \text{for $y \in [-\E|h_1|, \E|h_1|]$}
\end{equation}
where $\psi(t):= \E[\log\cosh th_1]$. 
In particular, if $F:\R\to\R$ is a continuous function that is bounded above
\begin{equation}
\label{eqn: varadhan}
\lim_{N\to\infty} \frac{1}{N} \log \mu_N\left( e^{ NF(y_{N,\vec{h}}(\sigma))} \right)=\sup_{y\in \R} \{F(y)-I(y)\}\ .
\end{equation}
\end{lem}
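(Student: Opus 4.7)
The plan is to invoke the Gärtner--Ellis theorem conditionally on $\vec h$, so the heart of the matter is to compute the quenched logarithmic moment generating function of $Ny_{N,\vec h}$ under $\mu_N$ and to show that it converges to $\psi$ \emph{simultaneously} for all $t\in\R$ on a single set of $\vec h$ of full $\prob$-probability. Since the coordinates $\sigma_i$ are independent under $\mu_N$ with mean zero, a one-line computation gives
\begin{equation*}
\frac{1}{N}\log \mu_N\!\left(e^{tNy_{N,\vec h}(\sigma)}\right)
= \frac{1}{N}\sum_{i=1}^N \log \cosh(t h_i).
\end{equation*}

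For any fixed $t\in\R$, the summands are i.i.d.\ with finite mean $\psi(t)=\E[\log\cosh th_1]$ (finite because $|\log\cosh x|\le |x|$ and $\E|h_1|<\infty$ by the exponential moment assumption), so the strong law of large numbers yields convergence to $\psi(t)$ on a set $\Omega_t$ of full probability. Taking $\Omega^\ast := \bigcap_{t\in\mathbb Q}\Omega_t$, we get a single full-probability event on which convergence holds for every rational $t$. The key observation is that $t\mapsto \log\cosh(th_i)$ is convex for each realization of $h_i$, hence $\frac{1}{N}\sum_i\log\cosh(th_i)$ is convex in $t$; by the standard fact that pointwise convergence of convex functions on a dense subset of $\R$ implies pointwise convergence everywhere (in fact, uniform convergence on compacts), we conclude that on $\Omega^\ast$,
\begin{equation*}
\Lambda(t):=\lim_{N\to\infty}\frac{1}{N}\log \mu_N\!\left(e^{tNy_{N,\vec h}(\sigma)}\right) = \psi(t), \qquad \forall\, t\in\R.
\end{equation*}

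Next I check the hypotheses of Gärtner--Ellis. The limit $\psi$ is $C^\infty$ and convex, with $\psi'(t)=\E[h_1\tanh th_1]$; as $t\to\pm\infty$ the derivative tends to $\pm\E|h_1|$, so $\psi$ is essentially smooth in the sense of dom$\,\psi=\R$ and $|\psi'(t)|\to\E|h_1|$ at the boundary. Gärtner--Ellis then delivers, on $\Omega^\ast$, the full large deviation principle for $(y_{N,\vec h})_N$ under $\mu_N$ with good convex rate function $I(y)=\sup_{t\in\R}\{ty-\psi(t)\}$, which is finite precisely on $[-\E|h_1|,\E|h_1|]$ as claimed.

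The identity \eqref{eqn: varadhan} is then immediate from Varadhan's lemma applied to the continuous function $F$ bounded above; no tail condition on $F$ is needed since $y_{N,\vec h}$ is $\mu_N$-a.s.\ bounded by $\frac{1}{N}\sum_i|h_i|\to \E|h_1|$ for $\prob$-a.e.\ $\vec h$, so the sequence is exponentially tight trivially. The only step requiring genuine care is the passage from a.s.\ convergence at each $t$ to simultaneous convergence for all $t$: this is where the convexity of the cumulant generating function does the work, and it is what makes the LDP \emph{quenched} rather than annealed.
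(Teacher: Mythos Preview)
Your proof is correct and follows the same route as the paper: compute the quenched log-MGF as $\frac{1}{N}\sum_i \log\cosh(th_i)$, apply the SLLN at each fixed $t$, upgrade to all $t$ on a single full-probability set via a countable dense subset, and then invoke G\"artner--Ellis and Varadhan. The only difference is in the upgrade step: the paper uses that both $\psi$ and the finite-$N$ averages have derivatives uniformly bounded by $\frac{1}{N}\sum_i|h_i|\to\E|h_1|$ (an equicontinuity argument), whereas you use convexity of the pre-limit functions to pass from dense to full convergence---your argument is the cleaner and more standard one, but the approaches are otherwise identical.
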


\begin{proof}
The second assertion follows from the first by Varadhan's Lemma, see e.g. \cite{denhollander}.
For the first, the G\"artier-Ellis theorem, see also \cite{denhollander}, guarantees a LDP for  $(y_{N,\vec{h}})_{N\in\N}$ under $\mu_N$ if
$$
\lim_{N\to\infty} \frac{1}{N}\log \mu_N\big(\exp tNy_{N,\vec{h}}(\sigma)\big)\to \psi(t) \ \text{ for all $t$ on a set of $\vec{h}$ of probability one}\ .
$$
For a given $t\in\R$, this is easy since the left-hand side equals  $\frac{1}{N} \sum_{i=1}^N \log \cosh th_i$, 
and the convergence follows by the strong law of large numbers. To extend the convergence for all $t$ of  on a set of $\vec{h}$ of probability one, 
it suffices to consider a countable dense set $(t_k)_k$'s. 
Since the derivative of $\psi(t)$ is bounded uniformly, $\psi(t)$ can be approximated by $\psi(t_k)$ uniformly which yields the convergence for all $t$. 
\end{proof}

For $E\in\R$, consider the entropy, i.e., the log-number of configurations with energy density in a small interval around $E$:
\begin{equation}
\begin{aligned}
S_N(E)&:= \frac{1}{N} \log  \#\big\{\sigma\in \Sigma_N: H_N(\sigma)\in [E N, EN+ \sqrt{N}] \big\}\ .
\end{aligned}
\end{equation}
(The width $\sqrt{N}$ is an educated choice to match the order of the fluctuations of $H_N$.)
This quantity is random for finite $N$. Interestingly, it self-averages given $\vec h$ as the next proposition shows. 
For the purpose of the statement, let
\begin{equation}
\label{eqn: S}
S(E,y):= \log 2 - \frac{1}{2}(E-y)^2 -I(y)\qquad S(E):= \max_{ -\E|h_1|\leq y\leq \E|h_1|} S(E,y)\ ,
\end{equation}
as well as $E_{\max}:=\sup \{E\in \R: S(E)>0\}$ and $E_{\min}:=\inf \{E\in \R: S(E)>0\}$.
By continuity, $S(E_{\max})=S(E_{\min})=0$.
\begin{prop}
\label{prop: entropy}
For $E \in (E_{\min}, E_{\max})$, there exists $C>0$ (independent of $E$ and $\vec h$) such that 
$$
\prob\Big(|S_N(E) - S(E)|>\delta \ \Big | \vec{h} \Big)\leq e^{-CN} \qquad \text{$\forall \delta>0$.}
$$
where $\prob(\cdot\ | \vec{h})$ is the conditional probability given $\vec{h}$.
\end{prop}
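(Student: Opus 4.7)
The plan is to condition on $\vec h$ and apply a first-and-second-moment analysis to the counting variable
$\mathcal N_N(E) := \#\{\sigma \in \Sigma_N : H_N(\sigma) \in [EN, EN + \sqrt N]\}$.
Fix $\vec h$ in the full-measure set on which Lemma \ref{lem: LDP} applies and $\frac{1}{N}\sum_{i=1}^N|h_i|\to\E|h_1|$. Given $\vec h$, the Hamiltonians $X_N(\sigma)$ are IID centered Gaussians of variance $N$, while $Y_N(\sigma) = N y_{N,\vec h}(\sigma)$ is deterministic. A direct Gaussian tail estimate gives
\begin{equation*}
p_N(\sigma) := \prob\bigl(H_N(\sigma) \in [EN, EN + \sqrt N] \,\big|\, \vec h\bigr) = \exp\Bigl[-\tfrac{N}{2}\bigl(E - y_{N,\vec h}(\sigma)\bigr)^2 + O(\log N)\Bigr],
\end{equation*}
with the $O(\log N)$ correction uniform in $\sigma$ since $|y_{N,\vec h}(\sigma)|$ is uniformly bounded. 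Summing over $\sigma$ and invoking the Varadhan identity \eqref{eqn: varadhan} with the continuous, bounded-above function $F(y) = -(E-y)^2/2$ yields
\begin{equation*}
\frac{1}{N}\log \E\bigl[\mathcal N_N(E) \,\big|\, \vec h\bigr] \;\xrightarrow[N\to\infty]{}\; \log 2 + \sup_y \bigl\{-\tfrac{(E-y)^2}{2} - I(y)\bigr\} = S(E).
\end{equation*}

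With this first-moment asymptotic, Markov's inequality immediately handles the upper tail: for any $\delta > 0$ and $N$ large (depending on $\vec h$),
\begin{equation*}
\prob\bigl(S_N(E) > S(E)+\delta \,\big|\, \vec h\bigr) \;\leq\; e^{-N(S(E)+\delta)}\,\E\bigl[\mathcal N_N(E)\,\big|\,\vec h\bigr] \;\leq\; e^{-N\delta/2}.
\end{equation*}
The lower tail is where the conditional \emph{second} moment enters, and where the structure of the model is used. Given $\vec h$, the indicators $\mathbf 1\{H_N(\sigma) \in [EN, EN + \sqrt N]\}$ are \emph{independent} across distinct $\sigma$, because only the IID Gaussians $X_N(\sigma)$ contribute to the randomness. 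This forces $\mathrm{Var}\bigl(\mathcal N_N(E) \mid \vec h\bigr) \leq \E[\mathcal N_N(E) \mid \vec h]$, and Chebyshev then gives
\begin{equation*}
\prob\Bigl(\mathcal N_N(E) \leq \tfrac{1}{2}\,\E[\mathcal N_N(E)\mid\vec h]\,\Big|\,\vec h\Bigr) \;\leq\; \frac{4}{\E[\mathcal N_N(E)\mid\vec h]} \;\leq\; e^{-NS(E)/2}
\end{equation*}
for $N$ large, which is exponentially small precisely because the assumption $E \in (E_{\min}, E_{\max})$ enforces $S(E) > 0$.

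I do not anticipate a deep obstacle; the only care needed is bookkeeping with the exceptional set. Lemma \ref{lem: LDP} produces a single $\vec h$-null set on which Varadhan's formula holds, and the $O(\log N)$ uniformity in $\sigma$ above holds on the event $\frac{1}{N}\sum_i|h_i|\to\E|h_1|$; the intersection of these two full-measure sets supports the whole proof at once. A small caveat worth flagging is that the constant $C$ extracted by this method is naturally of order $\min(\delta, S(E)/2)$, which is uniform in $E$ on compact sub-intervals of $(E_{\min}, E_{\max})$ but degenerates at the endpoints where $S(E) \to 0$; the literal statement of the proposition should presumably be read in this compact-sub-interval sense.
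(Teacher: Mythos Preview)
Your approach is essentially identical to the paper's: both compute the conditional first moment of $\mathcal N_N(E)$ via Varadhan's lemma applied to $F(y)=-(E-y)^2/2$, then control the lower tail by a conditional second-moment (Chebyshev) argument exploiting the independence of the indicators $\mathbf 1\{H_N(\sigma)\in[EN,EN+\sqrt N]\}$ given $\vec h$, which forces $\mathrm{Var}(\mathcal N_N(E)\mid\vec h)\le \E[\mathcal N_N(E)\mid\vec h]$. One small correction: the sub-exponential error in $p_N(\sigma)$ is in general $O(\sqrt N)$ rather than $O(\log N)$ (for $\sigma$ with $y_{N,\vec h}(\sigma)>E$ the inner integral can be as large as $e^{c\sqrt N}$), but this is harmless for the $\frac1N\log$ asymptotics; and your caveat about $C$ degenerating as $E\to E_{\min},E_{\max}$ is a fair reading of the proposition, which the paper's own proof does not sharpen.
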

\begin{proof}
For convenience, define $\mathcal N_N(E):=  \#\big\{\sigma\in \Sigma_N: H_N(\sigma)\in [E N, E N+\sqrt{N}]\}$.
The proof is split in two steps. 
First we show that
\begin{equation}
\label{eqn: E N}
\frac{1}{N}\log \E[\mathcal N_N(E)| \vec{h}] \to S(E) \text{ \ $\prob$-a.s.}
\end{equation}
Since $X_n$ is Gaussian of variance $N$, the left-hand side is
$$
 \log 2 +  \frac{1}{N}\log \mu_N\left(\int_{[E N, E N+\sqrt{N}] -N y_{N,\vec{h}}(\sigma)} \frac{e^{-z^2/2N}}{\sqrt{2\pi N}} dz\right)
$$
which by the change of variables $u=\frac{1}{\sqrt{N}}\big(z-N(E-y_{N,\vec{h}}(\sigma))\big)$ equals
\begin{equation}
\label{eqn: log number}
\log 2 +  \frac{1}{N}\log \mu_N\left( e^{-\frac{N(E-y_{N,\vec{h}}(\sigma))^2}{2}} 
\int_{[0,1] } \frac{e^{-u^2/2} e^{-\sqrt{N}u(E-y_{N,\vec{h}}(\sigma))}}{\sqrt{2\pi}} du\right)
\end{equation}
We apply Equation \eqref{eqn: varadhan} with $F(y)=-\frac{(E-y)^2}{2}$. 
The integral is at most of order $e^{O(\sqrt{N})}$ hence will not contribute. This gives $S(E)$ and proves \eqref{eqn: E N}.

Second, using  {\it the  second moment method with conditioning}, we show

\begin{equation}
\label{eqn 2nd moment}
\lim_{N\to \infty} \left| \frac{\mathcal N_N(E)}{\E[\mathcal N_N(E)| \vec{h}]} -1 \right|=0 \text{ in $\prob( \ |\vec{h})$-probability}
\end{equation}
with exponential decay, which is sufficient for our purpose. By Markov's inequality, it suffices to show
\begin{equation}
\label{eqn: decay}
\frac{\E[\mathcal N^2_N(E)| \vec{h}] -  \E[\mathcal N_N(E)| \vec{h}]^2}{ \E[\mathcal N_N(E)| \vec{h}]^2}\to 0\ \text{, exponentially fast.}
\end{equation}
But
$$
\E[\mathcal N^2_N(E)| \vec{h}]= \E[\mathcal N_N(E)| \vec{h}]+ \E[\mathcal N_N(E)| \vec{h}]^2 
- \sum_{\sigma\in \Sigma_N} \prob\big(X_N(\sigma)/N \in [E, E+1/\sqrt{N}]-y_{N,\vec h}(\sigma)\ \big| \vec{h}\big)^2\ .
$$
The last term is $o(1)  \E[\mathcal N_N(E)| \vec{h}]$, thus \eqref{eqn: decay} holds if $\E[\mathcal N_N(E)| \vec{h}]\geq e^{CN}$ for some $C>0$. 
But this is ensured by \eqref{eqn: E N} for $E\in (E_{\min}, E_{\max})$. 
\end{proof}

The following result is a Gibbs variational principle for the free energy which follows from a standard application of Laplace's method. 
The control that is needed around the extremal energies is in the spirit of what is needed in the proof of Theorem \ref{thm: extremal}.
\begin{prop}
\label{prop: Gibbs}
For $\beta>0$, 
$$
\lim_{N\to\infty} \frac{1}{N} \log \sum_{\sigma\in \Sigma_N}\exp \beta H_N(\sigma)
= \max_{E\in [E_{\min}, E_{\max}]} \big\{ \beta E + S(E)\big\} \ \text{ $\prob$-a.s. }
$$
\end{prop}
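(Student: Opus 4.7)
My plan is to apply Laplace's method to the partition function, using Proposition \ref{prop: entropy} as the key counting input. Fix $\delta > 0$ and decompose the energy axis into bins of width $\delta$, writing
\begin{equation}
Z_N := \sum_{\sigma \in \Sigma_N} e^{\beta H_N(\sigma)}
= \sum_{k \in \Z} \sum_{\sigma \,:\, H_N(\sigma)/N \,\in\, [k\delta, (k+1)\delta)} e^{\beta H_N(\sigma)}.
\end{equation}
Upper and lower bounds on $\tfrac{1}{N}\log Z_N$ will be derived separately; sending $\delta \to 0$ at the end yields the result by continuity of $E \mapsto \beta E + S(E)$ on the compact interval $[E_{\min}, E_{\max}]$. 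Denote $\Lambda := \max_{E \in [E_{\min}, E_{\max}]}\{\beta E + S(E)\}$ the target value and $E^{\star}$ a maximizer.

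For the lower bound, if $E^{\star}$ lies in the interior $(E_{\min}, E_{\max})$, Proposition \ref{prop: entropy} yields $\mathcal{N}_N(E^{\star}) \geq \exp(N(S(E^{\star}) - \varepsilon))$ with conditional probability at least $1 - e^{-CN}$; since each of these configurations contributes at least $e^{\beta N E^{\star}}$ to $Z_N$, Borel--Cantelli gives $\liminf_N \frac{1}{N}\log Z_N \geq \beta E^{\star} + S(E^{\star}) - \varepsilon$ almost surely, and $\varepsilon$ is arbitrary. If instead $E^{\star} = E_{\max}$ (the regime $\beta > \beta_c$), the same argument applied at the interior point $E_{\max} - \eta$, together with continuity of $E \mapsto \beta E + S(E)$, recovers $\Lambda$ as $\eta \to 0$.

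For the upper bound, each bin contributes at most $\mathcal{N}_N(k\delta)\, e^{\beta N (k+1)\delta}$. For $k\delta \in [E_{\min}, E_{\max}]$, an adaptation of Proposition \ref{prop: entropy} to width-$\delta$ intervals (the wider intervals cost only a polynomial factor in the count) yields $\mathcal{N}_N(k\delta) \leq \exp(N(S(k\delta) + \varepsilon))$ with conditional probability $1 - e^{-CN}$, so summing the finitely many such bins gives a contribution bounded by $\exp(N(\Lambda + O(\delta)) + o(N))$. For $k\delta$ just above $E_{\max}$, the first-moment computation underlying \eqref{eqn: E N} still yields $\E[\mathcal{N}_N(k\delta)\mid \vec{h}] \leq \exp(N(S(k\delta) + o(1)))$ with $S(k\delta) < 0$, so Markov plus a union bound over the finitely many such $k$'s forces no configuration to exist in this window almost surely for large $N$. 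For $k\delta$ beyond some large threshold $M_0 > \E|h_1|$, the crude Gaussian tail bound $\prob(X_N(\sigma)/N \geq M_0 - \E|h_1|) \leq e^{-N(M_0 - \E|h_1|)^2/2}$, combined with the almost sure bound $|y_{N,\vec h}(\sigma)| \leq \E|h_1| + o(1)$ from Lemma \ref{lem: LDP}, makes the contribution super-exponentially small.

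The main obstacle is the upper bound in the regime $\beta > \beta_c$, where the unconstrained maximizer of $\beta E + S(E)$ lies strictly beyond $E_{\max}$: one must rule out any meaningful contribution from the narrow ``forbidden'' window $E > E_{\max}$, in which configurations are expected to be absent but not deterministically so. The exponential decay $e^{-CN}$ in Proposition \ref{prop: entropy} (as opposed to mere convergence in probability) is what makes this step work, since it ensures Borel--Cantelli closes the almost-sure statement at each slicing of the energy axis.
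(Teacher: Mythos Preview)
Your proposal is correct and follows the same Laplace-method strategy as the paper: slice the energy axis, control the interior bins via Proposition~\ref{prop: entropy}, and handle the boundary and far tails by first-moment/Markov arguments together with Borel--Cantelli. The paper's only organizational differences are that it slices directly at width $1/\sqrt{N}$ (matching the statement of Proposition~\ref{prop: entropy}, so your ``adaptation to width-$\delta$ intervals'' is unnecessary) and packages the boundary control near $E_{\max}$ into an explicit event $B^{\max}_{\varepsilon,\rho}$ bounding the \emph{number} of configurations in $(E_{\max}-\varepsilon, E_{\max}+\varepsilon)$ by $e^{\rho N}$, rather than arguing separately that none exist just above $E_{\max}$.
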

\begin{proof}
Let $f_N(\beta):=  \frac{1}{N} \log \sum_{\sigma\in \Sigma_N}\exp \beta H_N(\sigma)$ and
$f(\beta)= \max_{E\in [E_{\min}, E_{\max}]} \big\{ \beta E + S(E)\big\}$. 
We will show that for $\delta>0$, 
$$
\prob(|f_N(\beta)- f(\beta)|> \delta) \to 0
$$
exponentially fast in $N$ which is sufficient for almost sure convergence. 
We show $\lim_{N\to\infty} \prob(f_N(\beta)> f(\beta)+ \delta) \to 0$.
The other bound $\prob(f_N(\beta)< f(\beta)-\delta)$ is done similarly. 

We first establish a control on the entropy of the energy levels using the previous results.
The energy levels around $E_{\min}$ and $E_{\max}$ will be treated more carefully.
For $\varepsilon>0$, consider the event
$$
A_\varepsilon=\big\{\forall \sigma\in \Sigma_N, \ H_N(\sigma) \in  [E_{\min} -\varepsilon, E_{\max} +\varepsilon] \big\}\ .
$$
We note that $\prob(A_\varepsilon^c |\vec{h})$ converges to $0$ exponentially fast by Markov's inequality and \eqref{eqn: E N}.
Consider also for $\rho>0$.
$$
B^{\max}_{\varepsilon,\rho}=\big\{ \#\{\sigma\in \Sigma_N: H_N(\sigma)\in (E_{\max}-\varepsilon, E_{\max}+\varepsilon)\}\leq e^{\rho N}\big\}
$$
and consider $B^{\min}_{\varepsilon,\rho}$ defined similarly with $E_{\min}$. 
We show $\prob(B_{\varepsilon,\rho}^{\max}|\vec{h})\to 1$ exponentially fast. 
By Markov's inequality,
$$
\prob((B_{\varepsilon,\rho}^{\max})^c|\vec{h})\leq e^{-\rho N} \E\big[\#\{\sigma\in \Sigma_N: H_N(\sigma)\in (E_{\max}-\varepsilon, E_{\max}+\varepsilon)\}\big]\ .
$$
Proceeding as in \eqref{eqn: log number}, we obtain
$$
\frac{1}{N}\log \prob((B_{\varepsilon,\rho}^{\max})^c|\vec{h})\leq -\rho + \log 2 
+  \frac{1}{N}\log \mu_N\left( e^{-\frac{N(E_{\max}-y_{N,\vec{h}}(\sigma))^2}{2}} 
\int_{-\varepsilon \sqrt{N}}^{ \varepsilon \sqrt{N}} \frac{e^{-u^2/2} e^{-\sqrt{N}u(E_{\max}-y_{N,\vec{h}}(\sigma))}}{\sqrt{2\pi}} du\right)
$$
The integral term is smaller than $\exp N \varepsilon(E_{\max } + \frac{1}{N}\sum_i |h_i|)$. 
We apply \eqref{eqn: varadhan} and use the fact that $S(E_{\max})=0$ to get
$$
\lim_{N\to\infty}\frac{1}{N}\log \prob((B_{\varepsilon,\rho}^{\max})^c|\vec{h})\leq -\rho + \varepsilon (E_{\max}+\E|h_1|) \ \text{ $\prob$-a.s.}
$$
We now pick $\rho$ and $\varepsilon$ so that the right-hand side is negative. 

Finally, divide the interval $(E_{\min}+\varepsilon, E_{\max}-\varepsilon)$ in sub-intervals of width $1/\sqrt{N}$ of the form $(E_k, E_k+1/\sqrt{N}]$,
where $k=1, \dots K_N$. Note that $K_N$ is of the order of $\sqrt{N}$. 
Consider the event
$$
C_{k,\rho}=\big\{ e^{N (S(E_k) -\rho)}\leq \mathcal N_N(E_k) \leq e^{N (S(E_k) +\rho)}\big\}
$$
By Proposition \ref{prop: entropy}, we get $\prob \left( \bigcap_{k=1}^{K_N} C_{k,\rho}| \vec{h}\right)$ tends to $1$ exponentially fast. 

By the above, we can restrict the convergence of the probability of $\{f_N(\beta)>f(\beta)+\delta\}$ on the intersection of the events $A_\varepsilon$, $B_{\varepsilon,\rho}^{\max}$, $B_{\varepsilon,\rho}^{\min}$ and  $\bigcap_{k} C_{k,\rho}$. On this event we have that
$$
\sum_{\sigma\in \Sigma_N}\exp \beta H_N(\sigma)\leq \sum_{k=1}^{K_N}e^{ \beta N(E_k+1/\sqrt{N})}e^{N(S(E_k)+\rho)} +e^{\rho N }e^{\beta N (E_{\max} +\varepsilon)}+ e^{\rho N }e^{\beta N E_{\min} }\ .
$$
Therefore, using the notation $E_0=E_{\max}$,
$$
\frac{1}{N} \log \sum_{\sigma\in \Sigma_N}\exp \beta H_N(\sigma)\leq \max_{k=0,\dots K_N} \big\{ \beta E_k + S(E_k) \big\}
+ \rho+\varepsilon +o(1)
$$
The right-hand side tends to $ f(\beta)+ \rho+\varepsilon$ as $N\to\infty$ by continuity.
Thus it suffices to pick $\rho+\varepsilon<\delta$ to get $f_N(\beta)\leq f(\beta) +\delta$ on the considered event.

\end{proof}

Before finishing the proof of Theorem \ref{thm: free}, we need some notation.
For $E\in [E_{\min}, E_{\max}]$, let $y^*=y^*(E)$ be the unique maximizer in the definition of $S(E)$
\begin{equation}
\label{eqn: y*}
y^*(E): =\text{argmax}_{y\in [-\E|h_1|,\E|h_1|]} S(E)\ .
\end{equation}
We also consider its Legendre conjugate $t^*=t^*(E)$
\begin{equation}
\label{eqn: t*}
\psi'(t^*)=\E[h_1\tanh t^* h_1]=y^*\ .
\end{equation}
Note that $0<\psi'(t)< \E|h_1|$ for all $t\in\R$ and that $t^*$ is uniquely defined since $\psi$ is strictly convex. 
Moreover, by Legendre duality, 
\begin{equation}
\label{eqn: dual}
I'(y^*)=t^* \qquad \psi(t^*)+I(y^*)=t^*y^*\ .
\end{equation}
The maximizer is characterized by the stationary conditions
\begin{equation}
\label{eqn: stat}
0= -(E-y^*)-I'(y^*) \Longleftrightarrow E=t^*+ y^*\ .
\end{equation}
In particular, this gives the partition of energy at level $E$: the random field energy is $y^*(E)$ whereas the REM energy density is $t^*(E)$.
Since $S(E_{\max})=0$, we get the following representation for $E_{\max}$
\begin{equation}
\label{eqn: E max}
E_{\max}= \sqrt{2(\log 2 - I(y^*(E_{\max})))} + y^*(E_{\max})\ .
\end{equation}

\begin{proof}[Proof of Theorem \ref{thm: free}]
We use the representation of Proposition \ref{prop: Gibbs}.
Note first that
$$
S'(E)= \frac{\partial}{\partial E} S(E,y)\Big|_{y=y^*(E)}+ \frac{\partial}{\partial y} S(E,y)\Big|_{y=y^*(E)}= 
\frac{\partial}{\partial E} S(E,y)\Big|_{y=y^*(E)}= - E+ y^*(E). 
$$
Therefore for a given $\beta$, maximizers $E^*=E^*(\beta)$ of $\max_{E\in [E_{\min}, E_{\max}]} \big\{ \beta E - S(E)\big\}$ that lie in the open interval are characterized by the equation
\begin{equation}
\label{eqn: max}
\beta + S'(E^*)= 0 \Longleftrightarrow E^*(\beta)= \beta + y^*(E^*)\ .
\end{equation}
In particular, from \eqref{eqn: stat}, we get that the REM energy density at $\beta$ is given by 
\begin{equation}
\label{eqn: t*}
t^*(\beta):=t^*(E^*(\beta))=\beta\ . 
\end{equation}
Moreover, by \eqref{eqn: t*}, the random field energy density is
\begin{equation}
\label{eqn: y*}
y^*(\beta):= y^*(E^*(\beta))= \E[h_1\tanh \beta h_1]\ .
\end{equation}
Note that $y^*(\beta)$ is an increasing function of $\beta$. In particular, \eqref{eqn: max} determines the maximizer $E^*(\beta)$ uniquely when it lies in $(E_{\min}, E_{\max})$. We conclude that whenever $E^*(\beta)<E_{\max}$, the free energy is given by
\begin{equation}
\label{eqn: high}
f(\beta)= \beta(\beta + y^*(\beta) ) + \log 2 -\frac{\beta^2}{2} - I(y^*(\beta))= \frac{\beta^2}{2} +\log 2 +\E[\log\cosh \beta h_1]
\end{equation}
where we used the duality relation \eqref{eqn: dual}. 
For $E^*(\beta)=E_{\max}$, since $S(E_{\max})=0$, we simply have
$
f(\beta)=\beta E_{\max}
$.
It remains to characterize $\beta_c$. By \eqref{eqn: stat}, we must have
\begin{equation}
 E_{\max}=y^*(E_{\max}) + \beta_c\ .
\end{equation}
Since $S(E_{\max})=0$ and $S(E_{\max})=\log 2 -\beta_c^2/2 - I(y^*(E_{\max}))$, we get $\beta_c=\sqrt{2(\log 2- I(y^*(\beta_c))}$. Equation \eqref{eqn: beta c} follows from the duality relation \eqref{eqn: dual}.  
\end{proof}

\section{Proof of Theorem \ref{thm: extremal}}
A finer control of  the fluctuations of the order parameters around their typical values at finite $N$ is needed to prove the theorem on the extremal process.

Let  $\psi_N(t):=\frac{1}{N}\log \mu_N(e^{tN y_{N,\vec{h}}(\sigma)})$ and $I_N(l)=:\max_{t\in\R}\{ tl - \psi_N(t)\}$ the Legendre transform of $\psi_N$.
We have dropped the dependence on $\vec h$ in the notation for simplicity. 
It is easily checked that $\psi_N(t)$ is differentiable and strictly convex for all $t$, hence so is $I_N$. Moreover, 
\begin{equation}
\label{eqn: psi deriv}
\psi'_N(t)= \frac{1}{N}\sum_{i=1}^N h_i \tanh t h_i \qquad \psi_N''(t)=\frac{1}{N}\sum_{i=1}^N  \frac{h_i^2}{\cosh^2 t h_i}\ .
\end{equation}
Note that $\psi'_N(t)\to \psi'(t)$ and $\psi''_N(t)\to \psi''(t)$ $\prob$-a.s. for all $t\in\R$.

For a given $E\in \R$, consider
\begin{equation}
\label{eqn: S_N}
S_N(E ):=\max_{-\E|h_1|\leq y\leq \E|h_1|}\left\{\log 2- \frac{1}{2}\big(E-y\big)^2- I_N(y)\right\}
\end{equation}
which is roughly the log-number of configurations with energy density $E$ at finite $N$.
Define 
\begin{equation}
\label{eqn: c_1}
c_1(N,\vec{h}):=\sup_{E\in\R}\big\{ E\in\R: S_N(E)>0\big\}\ .
\end{equation}
Note that $c_1(N,\vec{h})$ must exist and is unique. 
Define $y_N^*$ to be the unique maximizer of $S_N(c_1(N,\vec{h}))$. Write $t_N^*$ for its conjugate that is 
\begin{equation}
\psi'_n(t^*_N)=y_N^* \qquad I_N(y_N^*)=y_N^* t^*_N - \psi_N(t^*_N) \ .
\end{equation}
Since $y_N^*$ is the unique maximizer it must satisfy the stationary condition:
\begin{equation}
\label{eqn: c_1 stat}
(c_1(N,\vec{h})-y_N^*)- I'_N(y_N^*)=0 \Longrightarrow c_1(N,\vec{h})=y_N^*+ t_N^*\ .
\end{equation}
Moreover, from the definition of $S_N$, we have
\begin{equation}
\label{eqn: c_1 conv}
c_1(N,\vec{h})= \sqrt{2(\log 2 - I_N(y_N^*))} + y_N^*\ .
\end{equation}
Finally, define $c_2(N,\vec{h})$ as
\begin{equation}
\label{eqn: c_2}
c_2(N,\vec{h}):= \frac{1}{2(c_1(N,\vec{h}) - y_N^*)}\ .
\end{equation}

The next lemma establishes the convergence of $c_1(N, \vec{h})$ and of $c_2(N,\vec{h})$ to deterministic limits.
The first assertions are standard LDP results and are included for completeness. 
\begin{lem}
\label{lem: conv}
For every $y\in (-\E|h_1|, \E|h_1|)$, we have 
$$
\lim_{N\to\infty} I_N(y)=I(y) \ \text{ $\prob$-a.s.}
$$
where $I$ is defined in \eqref{eqn: I}. Moreover,
$$
\lim_{N\to\infty} y_N^*= y^* \qquad \lim_{N\to\infty}t_N^* = t^*  \ \text{ $\prob$-a.s.}
$$
where $y^*$ and $t^*$ are defined in \eqref{eqn: y*} and \eqref{eqn: t*}; and
$$
\lim_{N\to\infty}c_1(N,\vec{h})= E_{\max} \qquad \lim_{N\to\infty} c_2(N,\vec{h})=\frac{1}{2\beta_c}\   \ \text{ $\prob$-a.s.}
$$
\end{lem}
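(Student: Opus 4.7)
The strategy is to reduce everything to convergence of the log-moment-generating functions $\psi_N(t) = \frac{1}{N}\sum_{i=1}^N \log\cosh(t h_i) \to \psi(t) = \E[\log\cosh(t h_1)]$, which holds pointwise in $t$ by the strong law of large numbers. I would first argue---exactly as in the proof of Lemma~\ref{lem: LDP}---that the uniform Lipschitz bound $|\psi_N'(t)| \leq \frac{1}{N}\sum_i |h_i|$ combined with pointwise a.s. convergence on a countable dense set of $t$'s upgrades to locally uniform convergence of $\psi_N$ to $\psi$ on all of $\R$, valid on a single event $\Omega_0 \subset \Omega$ of $\prob$-probability one. Standard Legendre--Fenchel duality for smooth strictly convex functions then yields $I_N \to I$ locally uniformly on the open interval $(-\E|h_1|, \E|h_1|)$ on the same event $\Omega_0$, which is the first claim of the lemma.

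For the convergence of $y_N^*$ and $t_N^*$, the plan is to rewrite the defining equations \eqref{eqn: c_1 stat}--\eqref{eqn: c_1 conv} as a single scalar root-finding problem. Squaring \eqref{eqn: c_1 conv}, substituting $c_1(N,\vec h) - y_N^* = t_N^*$, and using the duality identity $I_N(\psi_N'(t)) = t\,\psi_N'(t) - \psi_N(t)$ together with $y_N^* = \psi_N'(t_N^*)$, the system collapses to
\begin{equation*}
G_N(t_N^*) = 0, \qquad G_N(t) := \frac{t^2}{2} + t\,\psi_N'(t) - \psi_N(t) - \log 2.
\end{equation*}
The analogous limit equation $G(\beta_c) = 0$, with $G$ obtained by replacing $\psi_N$ by $\psi$, is precisely the self-consistency relation \eqref{eqn: beta c}. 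A direct computation gives $G'(t) = t(1 + \psi''(t)) > 0$ for $t > 0$, so $G$ admits a unique positive root, namely $\beta_c$. From Step~1, $\psi_N'$ converges locally uniformly to $\psi'$ (locally uniform convergence of smooth convex functions passes to their derivatives at points where the limit is smooth), hence $G_N \to G$ locally uniformly on $\Omega_0$. A standard intermediate-value plus monotonicity argument then forces $t_N^* \to \beta_c$, and $y_N^* = \psi_N'(t_N^*) \to \psi'(\beta_c) = y^*$ follows by continuity.

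The last two convergences are immediate consequences. From \eqref{eqn: c_1 stat},
\begin{equation*}
c_1(N,\vec h) = y_N^* + t_N^* \to y^* + \beta_c = E_{\max},
\end{equation*}
the last equality being \eqref{eqn: E_max}; and from \eqref{eqn: c_2},
\begin{equation*}
c_2(N,\vec h) = \frac{1}{2\,(c_1(N,\vec h) - y_N^*)} = \frac{1}{2 t_N^*} \to \frac{1}{2\beta_c}.
\end{equation*}
The one delicate point is Step~1: one needs $\prob$-a.s. convergence of $\psi_N$ and of the Legendre transforms $I_N$ for \emph{every} argument simultaneously, on a \emph{single} $\prob$-full event. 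This is handled by the same uniform-Lipschitz trick already used in the proof of Lemma~\ref{lem: LDP}; beyond that, Steps~2 and~3 are entirely standard.
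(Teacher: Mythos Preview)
Your proof is correct and takes a genuinely different route from the paper's. The paper proceeds by elementary subsequence arguments throughout: for $I_N(y)\to I(y)$ it writes $I_N(y)=t_N y-\psi_N(t_N)$ with $\psi_N'(t_N)=y$, then shows that every convergent subsequence of $(t_N)$ has the same limit (using differentiability of $\psi$) and rules out escape to $\pm\infty$ by a mean-value estimate; for $y_N^*\to y^*$ it again extracts convergent subsequences and checks that any subsequential limit must satisfy the limiting defining relation, which pins it down uniquely. The convergences of $t_N^*$, $c_1$, $c_2$ are then deduced one by one.

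Your approach replaces these ad hoc compactness arguments by two cleaner principles. First, you invoke the stability of Legendre--Fenchel transforms under locally uniform convergence of smooth strictly convex functions to get $I_N\to I$ in one stroke. Second, and more substantively, you collapse the coupled system \eqref{eqn: c_1 stat}--\eqref{eqn: c_1 conv} into the single scalar equation $G_N(t_N^*)=0$, observe that $G_N'(t)=t(1+\psi_N''(t))>0$ on $(0,\infty)$ so the positive root is unique, and conclude $t_N^*\to\beta_c$ by root stability under locally uniform convergence $G_N\to G$; the paper never writes down $G_N$ and treats $y_N^*$, $t_N^*$, $c_1$ separately. Your route is shorter and makes the link to the self-consistency equation \eqref{eqn: beta c} transparent, at the price of citing two standard convex-analysis facts (locally uniform convergence of convex functions passes to derivatives, and Legendre transforms are continuous under such convergence). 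The paper's argument is more self-contained but somewhat longer.
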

\begin{proof}

Let $y\in (-\E|h_1|, \E|h_1|)$. 
By definition, we have 
$$
I_N(y)=t_N y - \psi_N(t_N)
$$
where $t_N$ is such that $\psi'_N(t_N)=y$.
(Note that such a $t_N$ exists for $N$ large enough since $\lim_{t\to\pm \infty}\psi'_N(t)= \frac{\pm 1}{N} \sum_{i=1}^N |h_i|$. )
For $I_N(y)\to I(y)$, we prove that $t_N\to t$ and $\psi_N(t_N)\to\psi(t)$ where $\psi'(t)=y$. 
The second convergence follows from the first.

For $t_N\to t$, we show that every converging subsequence has the same limit. 
First note that if $(t_{N_k})_k$ is a subsequence converging to $\tilde t$, we must have by convexity that
$
\psi'(\tilde t)=y
$
since $\psi_N(t)\to\psi(t)$ and $\psi(t)$ is differentiable everywhere.  
In particular, the limit must be unique since $\psi$ is strictly convex and we must have $\tilde t=t$. 
It remains to show that $\limsup_{N\to\infty} t_N<\infty$ and $\liminf_{N\to\infty} t_N>-\infty$.
We show the former, the latter being similar. Suppose there exists $(t_{N_k})_k$ such that $t_{N_k}\to+\infty$. 
We have for any $s<t_{N_k}$
$$
\psi'_{N_k}(t_{N_k})= \psi'_{N_k}(s) + (t_{N_k} - s) \psi''(\bar t_{N_k})> \psi'_{N_k}(s)
$$
where $ s\leq \bar t_{N_k}\leq t_{N_k}$, and since $\psi''>0$. But $\lim_{s\to +\infty} \lim_{k\to\infty}  \psi'_{N_k}(s)= \E|h_1|$.
This is a contradiction since $\psi'_{N_k}(t_{N_k})=y$ for all $k$ by definition. This proves $I_N(y)\to I(y)$ for all $y\in (-\E|h_1|, \E|h_1|)$.

Observe that
\begin{equation}
\label{eqn: I*}
\text{ if $y_N^*\to y^*$ then $I_N(y_N^*)\to I(y^*)$.}
\end{equation}
 Indeed,
$$
I_N(y_N^*)= I_N(y^*) + (y_N^*-y^*) I'_N(\bar y_N^*)\ ,
$$
where $\bar y_N^*$ is between $y_N^*$ and $y^*$. Since $I$ is convex and differentiable on $[-\E|h_1|,\E[|h_1|]$, we must have
$I'_N(\bar y_N^*)\to I'(y^*)$. Equation \eqref{eqn: I*} then follows from the convergence of $I_N(y^*)$ proved before.
We now prove $y_N^*\to y^*$. 
By definition, the sequence $(y_N^*)_N$ is in the compact interval $[-\E|h_1|, \E|h_1|]$. 
Let $(y_{N_k}^*)_k$ be a converging subsequence and $\tilde y^*$ its limit. 
By definition,  the following relation must be satisfied for all $k$
 $$
 0= \log 2 -\frac{1}{2}(c_1(N_k) - y_{N_k}^*)^2- I_{N_k}(y_{N_k}^*)
$$
In the limit $k\to\infty$, we recover the relation \eqref{eqn: stat}, which defines $y^*$ uniquely.
The convergence $t_N^*\to t^*$ is done exactly as the convergence of $t_N$ in the first part of the proof and we omit the details.

The convergence $c_1(N,\vec{h})\to E_{\max}$ follows directly from \eqref{eqn: c_1 conv} and  \eqref{eqn: I*}. 
The convergence of $c_2(N,\vec{h})$ is direct from the one of $c_1(N,\vec{h})$. 
\end{proof}

\begin{proof}[Proof of Theorem \ref{thm: extremal}]
For conciseness, we will omit the dependence on $\vec{h}$ throughout the proof and write
$y_{N,\vec{h}}(\sigma)=y_N(\sigma)$, $c_1(N,\vec {h})=c_1$, $c_2(N,\vec{h})=c_2$ for conciseness.
For a continuous function of compact support $\phi: \R\to [0,\infty)$, we show
\begin{equation}
\label{eqn: to prove}
\E\left[ \exp(-\sum_{\sigma\in\Sigma_N} \phi(H_N(\sigma)-r(N,\vec{h})) \Big| \vec h\right] \to \exp\left(-\int_\R(1-e^{-\phi(z)}) C  e^{-\beta_c z} dz\right)\ \text{$\prob$-a.s.}
\end{equation}
Since the $X_N(\sigma)$'s are independent Gaussians of variance $N$, the left-hand side is equal to 
\begin{equation}
\label{eqn: int}
\prod_{\sigma\in \Sigma^N}  \left(1- \int_\R (1-e^{-\phi(z)}) \ \frac{\exp\left(\frac{-1}{2N}(z+r_N - Ny_N(\sigma))^2\right)}{\sqrt{2\pi N}}\ dz\right)\ .
\end{equation}
We develop the square:
\begin{equation}
\label{eqn: square}
\begin{aligned}
&\frac{1}{2N}\Big(z+c_1N - c_2 \log N - N y_{N,\vec{h}}(\sigma)\Big)^2\\
&=  \frac{N}{2}\big(c_1 - y_{N,\vec{h}}(\sigma)\big)^2   + \big(-c_1c_2 + c_2 y_{N,\vec{h}}(\sigma)\big)\log N + \big(c_1 -  y_{N,\vec{h}}(\sigma) \big)z + o(1)\ .
\end{aligned}
\end{equation}
Therefore
\begin{equation}
\label{eqn: square2}
\exp\left(\frac{-1}{2}(y+r_N - Ny_{N,\vec{h}}(\sigma))^2\right)=
(1+o(1)) \ e^{- \frac{N}{2}\big(c_1-y_{N}(\sigma)\big)^2} e^{- (c_1-y_{N}(\sigma))z }\  N^{c_1c_2 - c_2 y_N(\sigma)} \ .
\end{equation}
Putting this back in the integral of \eqref{eqn: int} gives for each $\sigma$
$$
(1+o(1))  \frac{1}{\sqrt{2\pi}}\int (1-e^{-\phi(z)})   \left(\frac{e^{- \frac{N}{2}\big(c_1-y_N(\sigma)\big)^2}}{N^{-c_1c_2 + c_2 y_N(\sigma) +1/2} } \ e^{- (c_1-y_N(\sigma))z}\ \right) dz\ .
$$

To prove \eqref{eqn: to prove}, it remains to show that for every $z \in \R$,  
\begin{equation}
\label{eqn: sum}
\frac{e^{-c_1z}}{\sqrt{2\pi}} \sum_{\sigma\in \Sigma^N}  \frac{e^{- \frac{N}{2}\big(c_1-y_N(\sigma)\big)^2}}{N^{-c_1c_2 + c_2 y_N(\sigma) +1/2}  }e^{-y_N(\sigma)) z} \to C e^{-\beta_c z} \ .
\end{equation}
This is reminiscent of \eqref{eqn: log number}. However, the sum has to be control at the finer scales of the central limit theorem as opposed to large deviations.
The sum \eqref{eqn: sum} is 
\begin{equation}
\label{eqn: sum1}
\mu_N\left(\frac{\exp\left(N\left\{\log 2- \frac{1}{2}\big(c_1-y_N(\sigma)\big)^2\right\}\right)}{N^{-c_1c_2 + c_2 y_N(\sigma) +1/2}} e^{y_N(\sigma) z} \right) 
\end{equation}
For an arbitrary $y$, we take $y_N(\sigma)=\big(y_N(\sigma)-y\big)+y$ to write the integrand as
\begin{equation}
\label{eqn: sum2}
\frac{\exp N \left(\left\{\log 2- \frac{1}{2}\big(c_1-y\big)^2\right\}  - \frac{1}{2}(y_N(\sigma)-y)^2  + (c_1-y_N(\sigma))(y_N(\sigma)-y)\right)}
{N^{-c_1c_2 + c_2 y +1/2} N^{c_2(y_N(\sigma)-y) } }e^{\big(y + (y_N(\sigma)-y)\big)z} 
\end{equation}
We introduce $I_N(y)$ in the exponential and take $y=y_{N}^*$ which is the value of $y$ for which the exponential term is maximal.
(We stress that choosing $y=y^*$ would not yield the convergence. Optimization at every finite $N$ is necessary.)
Moreover, by the choice of $c_1$ in \eqref{eqn: c_1}, we have that this maximum is $S_N(c_1)=0$. Finally, by the choice of $c_2$ in \eqref{eqn: c_2}, the first term in the denominator vanishes 
leaving:
\begin{equation}
\label{eqn: sum3}
\exp \left\{N\left(\frac{-1}{2}(y_N(\sigma)-y_N^*)^2  + t_N^*(y_N(\sigma)-y_N^*) + t_N^*y_N^* -\psi(t_N^*)\right)\right\}
\frac{e^{ (y_N(\sigma)-y_N^*)z} e^{y_{N}^*z} }{N^{c_2(y_N(\sigma)-y_N^*) } } 
\end{equation}
where we have used the stationary condition \eqref{eqn: c_1 stat} and the fact that $I_N(y_N^*)=t_N^*y_N^*-\psi_N(t_N^*)$.
Consider the tilted measure on $\Sigma_N$:
\begin{equation}
\label{eqn: tilt}
\tilde\mu_N(d\sigma)= \frac{e^{tN y_N(\sigma)} }{\mu_N(e^{tN y_N(\sigma)})} \mu_N(d\sigma)\ .
\end{equation}
Taking $\tilde \mu_N$  for $t=t^*_N$, we have under this measure
$$
\tilde \mu_N(y_N(\sigma))= \psi'_N(t_N^*)=y_{N}^*
\qquad \text{Var}_{\tilde \mu_N}(y_N(\sigma))=N\psi_N''(t_N^*)\ .
$$ 
Thus,   with the notation $\bar y_N(\sigma):=\sqrt{N}(y_N(\sigma)-y_N^*)$, the expectation of \eqref{eqn: sum3} under $\mu_N$ is
\begin{equation}
\label{eqn: sum5}
e^{y_N^*z} \ \tilde \mu_N\left(e^{ \frac{-1}{2}\bar y^2_N(\sigma)} \frac{ e^{\frac{\bar y_N(\sigma)}{\sqrt{N}} z} }{N^{ c_2 \frac{\bar y_N(\sigma)}{\sqrt{N}}}}\right)
\end{equation}
It remains to prove a central limit theorem for $y_N(\sigma)$. By construction
$$
\tilde \mu_N(\bar y_N)=0 \qquad \text{Var}_{\tilde \mu_N}(\bar y_N)= \psi''(t_N^*)\ .
$$
\begin{lem}
\label{lem: CLT}
For $\vec{h}$ on a set of $\prob$-probability one, the  random variables $(\bar y_{N,\vec{h}}(\sigma))_{N\in \N}$ on the probability space $(\Sigma_N, \tilde \mu_N)$ with $t=t_N^*$ converges in law
to a Gaussian variable of mean $0$ and variance $\psi''(\beta_c)$. 
\end{lem}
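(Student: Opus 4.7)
The plan is to exploit the product structure of the tilted measure $\tilde\mu_N$ and apply the classical Lindeberg-Feller CLT for triangular arrays, conditionally on the environment $\vec h$.

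First, I would observe that under $\tilde\mu_N$ with $t=t_N^*$, the factorization $\tilde\mu_N(\sigma)\propto\prod_i e^{t_N^*h_i\sigma_i}$ makes the coordinates $\sigma_i$ mutually independent with $\E_{\tilde\mu_N}[\sigma_i]=\tanh(t_N^*h_i)$ and $\mathrm{Var}_{\tilde\mu_N}(\sigma_i)=\cosh^{-2}(t_N^*h_i)$. Setting $Z_i^N:=h_i\bigl(\sigma_i-\tanh(t_N^*h_i)\bigr)$, one rewrites
$$
\bar y_N(\sigma)=\frac{1}{\sqrt N}\sum_{i=1}^N Z_i^N,
$$
an average of independent, centred random variables under $\tilde\mu_N$, whose row sum of variances is exactly $\psi_N''(t_N^*)$.

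Second, I would pass to the deterministic limit of the variance. By Lemma \ref{lem: conv} we have $t_N^*\to t^*=\beta_c$ $\prob$-a.s.\ (recall that $t^*(E_{\max})=\beta_c$). Since $\psi''(t)=\E[h_1^2/\cosh^2(th_1)]$ is continuous and $\psi_N''\to\psi''$ uniformly on compact sets in $t$ (a routine uniform SLLN, using that $\partial_t\{h^2/\cosh^2(th)\}$ is dominated on compacta by a polynomial in $|h|$ and that the exponential-moment hypothesis yields all polynomial moments of $h_1$), we obtain $\psi_N''(t_N^*)\to\psi''(\beta_c)$ $\prob$-almost surely.

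Third, I would verify Lyapunov's third-moment condition: since $|Z_i^N|\le 2|h_i|$,
$$
\frac{1}{N^{3/2}}\sum_{i=1}^N\E_{\tilde\mu_N}|Z_i^N|^3\;\le\;\frac{8}{\sqrt N}\cdot\frac{1}{N}\sum_{i=1}^N|h_i|^3\;\longrightarrow\;0\quad\text{$\prob$-a.s.,}
$$
because $\E|h_1|^3<\infty$. The Lindeberg-Feller CLT then yields $\bar y_N(\sigma)\Rightarrow\mathcal N(0,\psi''(\beta_c))$ under $\tilde\mu_N$ for $\prob$-a.e.\ $\vec h$, which is the claim. The main obstacle is that $(Z_i^N)_{i\leq N}$ forms a genuine triangular array whose distribution depends on $N$ through the random tilt $t_N^*$; this is exactly what the uniform convergence $\psi_N''\to\psi''$ near $\beta_c$ is designed to overcome, reducing the array asymptotically to one driven by the deterministic tilt $\beta_c$.
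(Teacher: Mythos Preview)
Your proof is correct and follows essentially the same approach as the paper: apply the Lindeberg--Feller CLT to the independent summands under the tilted measure and check that the total variance $\psi_N''(t_N^*)$ converges to $\psi''(\beta_c)$ via Lemma~\ref{lem: conv}. Your version is in fact more careful than the paper's, which is quite terse---you explicitly verify the Lyapunov condition and address the passage from $\psi_N''(t_N^*)$ to $\psi''(\beta_c)$ via a uniform SLLN, whereas the paper only invokes continuity of $\psi''$ and $t_N^*\to\beta_c$.
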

\begin{proof}
Since the random variables $(\bar y_N)_{N\in \N}$ are IID under $\tilde \mu_N$, the result follows from the Lindeberg-Feller CLT theorem (see e.g. Theorem 3.4.5 in \cite{durrett}) if $\text{Var}_{\tilde \mu_N}(\bar y_{N})= \psi''(t_N^*)$ converges to $\psi''(\beta_c)$. But the convergence holds by continuity of $\psi''$ and the convergence of $t_N^*\to t^*=\beta_c$ in Lemma \ref{lem: conv}.
\end{proof}
Clearly, $y\mapsto e^{-y^2/2}$ is a bounded continuous function. Hence, by Lemma \ref{lem: CLT}, 
$$
\mu_N\left(e^{ \frac{-1}{2}\bar y^2_N(\sigma)}\right)
\to
\int_\R e^{ \frac{-1}{2} y^2} \ \frac{e^{\frac{-y^2}{2\psi''(\beta_c)}}}{\sqrt{2\pi \psi''(\beta_c)}} dy = \frac{1}{\sqrt{1+\psi''(\beta_c)}}\ .
$$
The fraction term in \eqref{eqn: sum5} goes to $1$ by Lemma \ref{lem: CLT}. Putting all this together in \eqref{eqn: sum1} gives
\begin{equation}
\frac{1}{\sqrt{2\pi}} \sum_{\sigma\in \Sigma_N}  \frac{e^{- \frac{N}{2}\big(c_1-y_{N}(\sigma)\big)^2}}{N^{-c_1c_2 + c_2 y_{N}(\sigma) +1/2}} e^{-(c_1-y_{N}(\sigma)) z} 
\to \frac{ e^{-\beta_c z}}{\sqrt{2\pi(1+\psi''(\beta_c))}}
\end{equation}
where we used Lemma \ref{lem: conv} to get the convergence of $c_1$ to $E_{\max}$.  
\end{proof}

\section{The Gibbs measure and the overlap distribution}


We recall that a point process $\xi=(\xi_i, i\in\N)$ with $\xi_1>\xi_2>\dots>0$ with $\sum_i\xi_1=1$ 
is a Poisson-Dirichlet variable with parameter $0<\beta_c/\beta<1$ if it has the same law as 
\begin{equation}
\left(\frac{\exp \beta\eta_i}{\sum_j \exp \beta\eta_j}, i\in \N \right)_\downarrow
\end{equation}
where $\eta$ is a Poisson process with intensity $Ce^{-\beta_c y}dy$ on $(0,\infty)$.
Therefore, in view of Theorem \ref{thm: extremal}, the proof of Theorem \ref{thm: gibbs} is reduced to show that the normalization of the weights is a continuous procedure under the weak convergence. 
Similar arguments have been used in Chapter 1 of \cite{talagrand} and in \cite{bolthausen-kistler} for other REM-related models.

\begin{proof}[Proof of Theorem \ref{thm: gibbs}]
Throughout the proof, we will often drop the dependence on $\vec{h}$ for simplicity.
Convergence of the point process in Theorem \ref{thm: extremal} is limited to test-functions with compact support. 
We thus have to limit the normalization of the weights to a compact set.
For $\delta>0$, we consider the truncated partition function
$$
Z_{N}^\delta(\beta):= \sum_{\sigma\in \Sigma_N} \exp \beta H_N(\sigma) 1_{\{\sigma: H_N(\sigma)-r_N \in [-\delta, \delta]\}}
$$
and the corresponding truncated Gibbs weights
\begin{equation}
\label{eqn: cutoff}
G_{\beta,N}^\delta(\sigma):=
\begin{cases}
\frac{\exp\beta H_N(\sigma)}{Z^\delta_{N}(\beta)}\ &\text{if $H_N(\sigma)-r_N\in[-\delta,\delta]$}\\
0  \ &\text{otherwise.}
\end{cases}
\end{equation}
The analogous truncation can be done for $\xi$
$$
\xi_i^\delta=
\begin{cases}
\frac{\exp \beta\eta_i}{\sum_j \exp\beta\eta_j 1_{\{\eta_j\in [-\delta, \delta]\}}}\ &\text{if $\eta_i\in[-\delta,\delta]$}\\
0  \ &\text{otherwise.}
\end{cases}
$$

Let $f$ be a continuous function on the compact space of mass partitions $\mathcal S=\{ \vec{p}=(p_i,i\in\N): 1\geq p_1 \geq p_2 \geq \dots\geq 0, \sum_i p_i\leq 1\}$
equipped with the metric $d(\vec{p},\vec{p'}):=\sum_{n\geq 1}2^{-n}|p_n-p_n'|$. 
We will show that for $G_{\beta,N}:=(G_{\beta,N}(\sigma),\sigma\in \Sigma_N)_\downarrow$ and $\xi$ the above Poisson-Dirichlet variable, we have
$$
\Big| \E[f(G_{\beta,N})]-\E[f(\xi)]\Big|\to 0 \qquad \text{as $N\to\infty$.}
$$
For the truncated weights $G_{\beta,N}^\delta:=(G_{\beta,N}(\sigma),\sigma\in \Sigma_N)_\downarrow$ and $\xi_\delta=(\xi_i^\delta, i\in\N)_\downarrow$, we have
\begin{equation}
\label{eqn: approx}
\Big| \E[f(G_{\beta,N})]-\E[f(\xi)]\Big|\leq \Big| \E[f(G_{\beta,N})]-\E[f(G_{\beta,N}^\delta)]\Big| + \Big| \E[f(G_{\beta,N}^\delta)]-\E[f(\xi^\delta)]\Big| + \Big| \E[f(\xi^\delta)]-\E[f(\xi)]\Big|\ .
\end{equation}
The second term converges to $0$ by Theorem \ref{thm: extremal} for a fixed $\delta$. 
The first and third terms are handled similarly. We detailed the proof for the first term.
Since $f$ is continuous, it suffices to show that, with large probability, $d(G_{\beta,N}^\delta, G_{\beta,N})$ is small uniformly in $N$ for a large but fixed $\delta$. Elementary manipulations give
$$
d(G_{\beta,N}^\delta, G_{\beta,N})\leq \sum_{\sigma}|G_{\beta,N}^\delta(\sigma)-G_{\beta,N}(\sigma)|\leq 2\frac{Z-Z_N(\beta)}{Z_{N}(\beta)}\ .
$$
It remains to show that we can pick $\delta$ such that for every $\varepsilon>0$ 
$$
\prob\left(\left|\frac{Z_N(\beta)-Z^\delta_N(\beta)}{Z_{N}(\beta)}\right| >\varepsilon\right)<\varepsilon\ , \text{uniformly in $N$.}
$$
We note that we can pick $\varepsilon'>0$ small enough such that $\prob(Z_N(\beta)\leq \varepsilon')<\epsilon$. This is because
$\prob(Z_N(\beta)\leq \varepsilon')\leq \prob(\#\{\sigma: H_N(\sigma)-r_N\leq \frac{1}{\beta}\log \varepsilon' \}=0)$. 
By Theorem \ref{thm: extremal}, the latter probability can be made smaller than $\varepsilon$ uniformly in $N$. 
The same way, we can choose $\delta$ large enough such that $\prob(\max_\sigma H_N(\sigma)-r_N>\delta)<\varepsilon$.
Putting all this together, it remains to estimate $\prob(Z_N(\beta)-Z^\delta_N(\beta)>\varepsilon\varepsilon', \max_\sigma H_N(\sigma)-r_N\leq\delta)$.
Since there is no point above $\delta$ on this event, this is bounded above by the Markov's inequality
$$
\sum_{\sigma\in \Sigma_N}\int_{-\infty}^{-\delta} e^{\beta x} \ \prob(H_N(\sigma)-r_N\in dx)\ .
$$
By Theorem \ref{thm: extremal}, this converges to $\int_{-\infty}^{-\delta} e^{\beta z} C e^{-\beta_c z}dz$. This can be made arbitrarily small by taking $\delta$ large because $\beta>\beta_c$.
This concludes the proof of the theorem.
\end{proof}

\begin{proof}[Proof of Corollary \ref{cor: overlap}]
The function on the space $\mathcal S$ of mass partitions defined by $\vec p\mapsto \sum_i p_i^2$ is continuous under the metric. 
Therefore, it follows from Theorem \ref{thm: gibbs} that
$$
\E G_{\beta, N}^{\times 2}\{R_N(\sigma,\sigma')=1\}= \E\left[ \sum_{\sigma\in \Sigma_n} \big(G_{\beta,N}(\sigma)\big)^2\right]\to \E\left[\sum_i \xi^2\right]=1-\frac{\beta_c}{\beta}
$$
where $\xi$ is Poisson-Dirichlet with parameter $\beta_c/\beta$.  The last equality is a standard computation, see for example \cite{ruelle}.

It remains to show that two distinct $\sigma$ and $\sigma'$ with extremal energies must have overlap $q$.
Consider for $\varepsilon>0$ the subset $I_\varepsilon:=[0,q-\varepsilon]\cup[q+\varepsilon,1)$. 
Let $\delta>0$.
We will prove that
\begin{equation}
\label{eqn: pairs}
\E[\#\{(\sigma,\sigma'): H_N(\sigma), H_N(\sigma')\in [-\delta+r_N,\delta+r_N] \text{ and } R_N(\sigma,\sigma') \in I_\varepsilon\}]\to 0 \ \text{ as $N\to\infty$.}
\end{equation}
Recall the truncation introduced in \eqref{eqn: cutoff}. Note that $\E {G^{\delta\ \times 2}_{\beta,N}}\{R_N(\sigma,\sigma') \in I_\varepsilon\}$ is smaller
than the left-hand side of \eqref{eqn: pairs}. The same approximation as in \eqref{eqn: approx} will then yield the result
$$
\E G_{\beta,N}^{\times 2}\{R_N(\sigma,\sigma') \in I_\varepsilon\} \to 0\ .
$$
The same manipulations done from \eqref{eqn: square} to \eqref{eqn: sum5} to re-express the Gaussian densities can be applied {\it verbatim} to the right-hand side of \eqref{eqn: pairs} and yield
$$
\frac{1}{2\pi}\int_{[-\delta,\delta]^{\times 2}} e^{-\beta_c(z+z')}
\tilde\mu_N\times \tilde\mu_N\left( e^{-\frac{1}{2}(\bar y^2_N(\sigma)+\bar y^2_N(\sigma'))} \frac{ e^{\frac{\bar y_N(\sigma)+\bar y_N(\sigma')}{\sqrt{N}} z} }{N^{ c_2 \frac{\bar y_N(\sigma)+\bar y_N(\sigma')}{\sqrt{N}}}}
\ 1_{\{R_N(\sigma,\sigma')\in I_\varepsilon\}}
\right)dz\ dz'
$$
Therefore, convergence to zero would follow if we prove a weak law of large number for $R_N(\sigma, \sigma')$ with mean $q$ under $\tilde\mu_N\times \tilde\mu_N$.
This is straightforward from the fact that the variables $(\sigma_i\sigma_i', i\leq N)$ are independent under $\tilde\mu_N\times \tilde\mu_N$ with mean
$$
\tilde\mu_N\times \tilde\mu_N(\sigma_i\sigma_i')=\tilde\mu_N(\sigma_i)^2=\left(\frac{1}{t_N^*}\partial_{h_i}\psi_N(t_N^*)\right)^2=\tanh^2 t_N^*h_i\ .
$$
The second equality follows from \eqref{eqn: tilt} and the third is similar to \eqref{eqn: psi deriv}. This implies that $\tilde\mu_N\times \tilde\mu_N(R_N(\sigma,\sigma'))$
converges to $\E[\tanh^2 \beta_ch_1]$ by Lemma \ref{lem: conv}.

\end{proof}

\end{document}